\newcommand{\E}{\ensuremath{\mathbb{E}}}
\newcommand{\Prob}{\ensuremath{\mathbb{P}}}
\newtheorem{thm}{Theorem}[section]
\newtheorem{lem}{Lemma}[section]
\newtheorem{cor}[thm]{Corollary}
\newtheorem{rem}{Remark}
\newtheorem{proposition}[thm]{Proposition}
\newcommand{\Rset}{\ensuremath{\mathbb{R}}}
\newcommand{\Nset}{\ensuremath{\mathbb{N}}}
\newcommand{\bo}{\ensuremath{\mathrm{O}}}
\newcommand{\Cset}{\ensuremath{\mathbb{C}}}
\newcommand{\Zset}{\ensuremath{\mathbb{Z}}}
\title{Central limit theorem analogues for multicolour urn
  models}
\author{Noela M\"{u}ller \\
Institute of Mathematics \\
J.W.~Goethe University \\
60054 Frankfurt am Main\\
Germany\\ \\
Email: {\tt nmueller@math.uni-frankfurt.de}}
\begin{document}
\maketitle

\begin{abstract}
The asymptotic behaviour of a generalised
P\'olya--Eggenberger urn is well--known to depend on the spectrum of its replacement matrix: If its dominant eigenvalue
$r$ is simple and no other eigenvalue is ``large'' in the sense that its real
part is greater than $r/2$, the
normalized urn composition is asymptotically normally distributed. However, if
there is more than one large eigenvalue,  the first few random draws have a non--negligible effect on the evolution of the urn process and almost sure random
tendencies of order larger than $\sqrt{n}$ 
typically prevent a
classical central limit theorem. In the present work, a
central limit theorem analogue for the fluctuations of urn models with
regard to random linear drift and random periodic growth of order
larger than $\sqrt{n}$ is proved, covering the $m$-ary search tree and B-trees. 
The proof builds on an eigenspace decomposition of the process in order to separate components of different growth orders. By an accurately tailored adaption of martingale
techniques to the components, their joint limiting behaviour is established and translated back to the urn process. Conveniently, the approach encompasses results on small urn
models and therefore provides a unifying perspective on central
limit theorems for certain urn models, irrespective of their spectrum.
\end{abstract}

\noindent
{\textbf{MSC2010:}} 60F05, 60F15, 60C05, 60J10.

\noindent
\textbf{Keywords:} Generalised P\'olya urn, cyclic urn, m-ary search tree, weak
convergence, central limit theorem analogue, martingale central limit theorem.

\section{Introduction}
Urn models are prominent probabilistic schemes with applications in a
variety of areas. Their popularity is owed both to their simplicity as
to their flexibility. In the present article, we focus on the
following model: Consider an urn that evolves in discrete time and
contains balls of $q \in \Nset_{\geq 2}$ different colours, labelled
$1, \ldots, q$. Its stochastic evolution is determined by two
deterministic parameters, namely the ball configuration at time $0$
and a scheme $R \in \Zset^{q \times q}$ which represents the transition
rules. The matrix $R$ is called \textit{generating matrix} of the following picturesque process: 
Immediately before each time $n \in \mathbb{N}$, a ball is drawn from the urn uniformly at
random and independently of the previous draws. If the drawn ball has colour $j$, it is put back into the urn together with $R_{ij}$ balls of colour $i$, for each $i \in \{1, \ldots, q\}$. If $R_{ij}<0$, $|R_{ij}|$ balls of colour $i$ are removed from the urn. 
In the course of the paper, we
assume that $R$ is diagonisable over $\mathbb{C}$ and that the same positive number of balls is added at each
step, such that the number of balls in the urn at time $n$ is deterministic. For a precise formulation of all assumptions see (A$1$) - (A$4$) in the beginning of section \ref{Main}. We denote the number of balls of type $i$ after $n$ draws from the urn by $X_n^{(i)}$. The vector $X_n:=(X_n^{(1)}, \ldots, X_n^{(q)})^t$ is called the \textit{urn composition} at time $n$. 

Important standard references on urn models are \cite{J,M}. The probably most exhaustive treatment of a general class of urn models is \cite{J}, 
where further references on the subject can be found.
The importance of the spectrum of $R$ for the asymptotic behaviour of the urn
composition has been known for a long time
\cite{athreya1968embedding, KeSti}. In the current setting, the asymptotics of $(X_n)_{n \geq 0}$ are as follows: The
proportions 
of balls of the different types converge almost
surely as the number of draws tends to infinity \cite{Gou}. Whether the limit $V$ is random or deterministic depends on the initial
composition of the urn and the multiplicity of the largest
eigenvalue $r$ of $R$. The non--zero components of $V$ correspond to so--called \textit{dominant} colours, which may be structured in classes, see section \ref{Main}.
If the multiplicity of $r$ is exactly one, the rescaled urn
composition vector is asymptotically
normally distributed \cite{athreya1968embedding, J, Pou,S, LP}. On the other hand, if there is more than one
eigenvalue with real part greater than $r/2$, the asymptotic expansion of the urn composition typically contains random terms of size larger than $\sqrt{n}$ which may even oscillate, compare \cite{CP, ChHw, Pou}. In such a situation, the fluctuation about these random tendencies is of some interest \cite{KuSu, MN}. This question is addressed in the present article, and in order to study the fluctuations, we employ a ``non-classical'' normalisation of the urn composition vector that involves random
centering and possibly random scaling. Following
\cite{HH}, we use the term
central limit theorem {\em analogue} to refer to
this type of result.  Our main result is the following theorem.



\begin{thm}
Suppose that  the assumptions of Theorem \ref{fluc} hold, let $\lambda_1, \ldots, \lambda_q$ be the eigenvalues of $R$ ordered by decreasing real parts and $v_1, \ldots, v_q$ corresponding right eigenvectors of $R$.
With $p:=\max\{k \in \{1, \ldots, q\}: \Re(\lambda_k)/r
> 1/2\}$, there are two cases:
\begin{enumerate}
\item Suppose that for all $\lambda_k$ that belong to a dominant class, $\Re(\lambda_k)
  \not= r/2$. Then there exist complex--valued mean zero random variables $\Xi_1,
  \ldots, \Xi_{p}$ such that
\begin{align}
\frac{1}{\sqrt{n}}\left(X_n -\E[X_n] - \sum_{k=1}^p
  n^{\frac{\lambda_k}{r}}\Xi_kv_k \right)
  \stackrel{\mathcal{L}}{\longrightarrow} \mathcal{N}(0, A_V), \qquad n \to \infty,
\end{align}
where $\mathcal{N}$ has a non--degenerated,
centered multivariate Gaussian mixture
distribution. 
\item Suppose that there is some $\lambda_k$ that belongs to a dominant class such that $\Re(\lambda_k)
  = r/2$. Then there exist complex--valued mean zero random variables $\Xi_1,
  \ldots, \Xi_{p}$ such that
\begin{align}
\frac{1}{\sqrt{n\log n}}\left(X_n - \E[X_n] - \sum_{k=1}^p
  n^{\frac{\lambda_k}{r}}\Xi_kv_k \right)
  \stackrel{\mathcal{L}}{\longrightarrow} \mathcal{N}(0, A_V), \qquad n \to \infty,
\end{align}
where $\mathcal{N}$ has a non--degenerated,
centered multivariate Gaussian mixture
distribution. 
\end{enumerate}
\end{thm}

For a more detailed version, see Theorem \ref{fluc}.
Intuitively, the statement is the following:
For
matrices $R$ with simple largest eigenvalue $r>0$ and real parts of all
other eigenvalues bounded above by $r/2$, the correct normalisation is
the classical one. In this case, the theorem reduces to previously known results. But Theorem \ref{fluc} states that also in the more complex situation where $r$ is simple and there are other
eigenvalues with real parts greater than $r/2$, the composition vector centered by a
random vector and scaled by $\sqrt{n}$ is asymptotically normally distributed. The random variables $\Xi_1, \ldots, \Xi_p$ in Theorem \ref{fluc} arise as martingale limits and are therefore usually characterised by distributional identities \cite{J, Mailler}. Eventually, if $r$ is a multiple eigenvalue, the urn composition weakly
converges to a mixed normal distribution after centering by a random
vector and scaling by $\sqrt{n}$.
The relaxation of the term ``central limit
theorem'' therefore leads to a unified perspective for the fluctuation of urn composition vectors. From this point of view, the distinction between the different kinds of asymptotics in the urn composition arises from a decomposition of the process into a sum over components of different
sizes. These components are typically classified as {\em large} and {\em small} components. In order to treat the components on a common scale
$\sqrt{n}$, each component has to be centered in an
appropriate way. However, large components exhibit almost sure random tendencies of magnitude larger than $\sqrt{n}$, and therefore in their presence, the centering involves the deterministic expectation as well as
random terms. 

From the point of view of applications, the probably most interesting aspect of Theorem \ref{fluc} is the central limit
theorem analogue for urn schemes with periodicities, cf.~ \cite{CP2, ChHw, MN}. For the case of cyclic urns, which may serve as a prototypical example for this class, Neininger and the present author \cite{MN} use the techniques developed in \cite{Nei} to prove a central limit theorem analogue for this urn model by means of the contraction method. This strategy is of
independent interest, as it does not use underlying martingale
structures, see also \cite{KnaNe}. 

Martingales and a spectral decomposition of the process play an important role in many works on urn models \cite{athreya1968embedding, Basak, Berti, CP, CP2, Gou, J, Jan16, Mailler, MN, Pou, S}. The proof strategy of the present article starts with these common themes, but then investigates the fine fluctuations around random drifts and
random periodic growths by connecting a careful study of appropriate
residuals with classical martingale limit theory, see \cite{HH}. 



The paper is organised as follows. In section 2.1, the urn models
under consideration are defined formally and their normal form is
given. Section 2.2 contains Gouet's result on the asymptotics of the
proportions of the urn composition, as well as the central limit
theorem \ref{fluc}. We provide some examples to illustrate the central
limit theorem in section 2.3. The third section is devoted to a
careful study of the various components of the urn process and
prepares section 4, where Theorem \ref{fluc} is finally proven.

{\bf Notation.} For a complex number $z \in \mathbb{C}$, we denote by $\Re(z)$,
$\Im(z)$ and $|z|$ its real part, imaginary part and complex modulus,
respectively. For a complex vector $v \in \mathbb{C}^q$ and $i \in
\{1, \ldots, q\}$, we denote by $v^{(i)}$ its $i$-th component and by
$v^t$ and $v^{\ast}$ its transpose and conjugate transpose,
respectively. Further let $|v|$ denote its $L^1$-norm. We equip
$\mathbb{C}^q$ with the standard inner product $\langle \cdot, \cdot
\rangle$, where $\langle u, v \rangle := u^{\ast} v$. We denote by $\text{Id}_{\Cset^q}$ the $q
\times q$ identity matrix. Let $\mathbf{1}$ denote
the $q$ dimensional all ones vector. For $A \subseteq \{1, \ldots, q\}$ and $v \in \mathbb{C}$, let
$v_{A}$ be the $q$ dimensional vector defined by
$v_{A}^{(i)} = v^{(i)} \cdot \mathds{1}_{\{i \in A\}}$.  Let $\Nset:=\{1, 2, \ldots
\}$, $\Nset_0:=\{0, 1, 2, \ldots
\}$ and $\Zset_{-}:=\{0, -1 ,
-2, \ldots\}$
denote the set of non-positive integers. We use Bachmann--Landau
symbols in asymptotic statements. Finally, convergence in distribution
is denoted as $\stackrel{\mathcal{L}}{\longrightarrow}$.

\vspace{0.5 cm}
{\bf Acknowledgements.} I warmly thank Ralph Neininger and Henning Sulzbach for their help that led to a considerable improvement of the manuscript.
I am also indebted to Kevin Leckey and Andrea
Kuntschik for valuable remarks.

\section{Main result} \label{Main}

\subsection{Preliminaries}
To begin with, we specify the urn models that are the topic of the
current work and---in view of the proof---choose a suitable common
framework. For more general and more exhaustive information on urn
models, see the pivotal work by Janson \cite{J}.

We consider a generalised P\'{o}lya urn process in discrete time, which describes the joint evolution of a population of balls of $q \in \Nset$
different colours subject to drawing and replacement activities. For each $n \geq 0$ and $j \in \{1, \ldots, q\}$, let $X_{n}^{(j)}$ denote the number of balls of colour $j$ in the urn after
$n$ draws. We collect these numbers in the 
\textit{composition vector} $X_n \in \Nset_0^q$ of the urn at time $n$, i.e. 
\begin{eqnarray*}
X_n=\left(X_{n}^{(1)}, X_{n}^{(2)}, \ldots,  X_{n}^{(q)}\right)^t.
\end{eqnarray*}

The evolution of the urn process is determined by the following rules:
For each $j \in \{1, \ldots, q\}$, the number $X_{0}^{(j)}$ of balls of colour $j$ in the urn at time $0$ is fixed. In other words,
$X_0 \in \Nset_0^q$ is assumed to be any {\em deterministic} vector that satisfies some basic requirements throughout the paper. Immediately before each of the following times $n \geq 1$, a ball is
drawn uniformly at random from the urn, independently of all previous draws. If the drawn ball has colour $i$, we  add $\Delta_{i}^{(j)}$ 
balls of colour $j$ for $j=1, \ldots, q$ to the urn. Here, vector $\Delta_i \in \Zset^q$, $i=1,\ldots,q$,
coordinates the change in the urn composition if a ball of colour $i$ is
drawn and we assume $\Delta_1, \ldots, \Delta_q$ to be deterministic as well. Moreover, let $R:= (\Delta_1, \ldots,
\Delta_q)$ to be the matrix with columns
$\Delta_1, \ldots, \Delta_q$. $R$ is called the \textit{generating matrix}
of the process. Note that some authors prefer to work with the matrix transpose $R^t$, the so called \textit{replacement matrix}, which should be kept in mind when comparing the results.

The dynamics of the Markov process $(X_n)_{n \geq 0}$ are fully described by a valid $R$
and $X_0$. Our
basic assumptions on these quantities are:
\begin{enumerate}[itemsep =0pt]
\item[(A$1$)] $R$ has constant column sum $r \in \mathbb{N}$. 
\item[(A$2$)] $R_{i,j} \geq 0$ for $i \not= j$ and if $R_{i,i}<0$, then
  $|R_{i,i}|$ divides $X_{0}^{(i)}$ and $R_{i,j}$ for all $1 \leq j \leq q$.
 \item[(A$3$)] The initial composition of the urn is such that
for all colours $j$, there exists $n \in \mathbb{N}_0$ with $\Prob\left(X_{n}^{(j)} > 0\right) > 0$.
\end{enumerate}
Assumption (A$1$) guarantees a steady and deterministic linear growth, while (A$2$) is common in the literature on urn models and assures that the process does not get stuck by asking for
an impossible removal of balls. Finally, (A$3$) prevents a trivial reduction to smaller urns.

\textit{Normal form.} Matrices with non--negative
off--diagonal entries are called \textit{Metzler--Leontief} matrices, or short ML--matrices. It is possible to assume that the generating matrix $R$ is given in the following normal form, see \cite{J,S}: In order to arrive at the normal form, we first classify the indices, or colours, of $R$ in the following way (note that the partition only depends on the positions of positive entries in $R$). Write $i \to j$ (``colour $i$ leads to colour $j$'') if, starting with a single ball of colour $i$, we have
$\Prob\left(X_{n}^{(j)} > 0\right) > 0$ for some $n \in
\mathbb{N}_0$. Equivalently, $(R^n)_{j,i}>0$. We say that $i$ and $j$ communicate and write $i \leftrightarrow j$ if $i \to j$ and $j \to i$. 
The equivalence relation $\leftrightarrow$ 
partitions the set of colours $\{1, \ldots, q\}$ into equivalence classes
$\mathcal{C}_1, \ldots, \mathcal{C}_d$. 
If $d=1$, the process is called {\em irreducible}.

On the level of classes $\mathcal{C}_1, \ldots, \mathcal{C}_d$, the relation $\to$ induces a partial order: We write $\mathcal{C}_i \to \mathcal{C}_j$ (``class $\mathcal{C}_i$ leads to class $\mathcal{C}_j$''), if some (then all) colours in $\mathcal{C}_i$
lead to some (then all) colours in $\mathcal{C}_j$.
A class is called {\em dominant} if it is maximal with respect to the induced partial order, that is, if it does not lead to any other class except itself.
We distinguish three different ``types'' of classes.
Class $\mathcal{C}_i$ is of \textit{type $1$}, if it is dominant and 
there is no $j \not= i$ with $\mathcal{C}_j \to \mathcal{C}_i$.
Any dominant class which is not of type 1 is of \textit{type $2$}. All non--dominant classes are of \textit{type $3$}. Subsequently, we will always assume that the classes are ordered as follows:
Let $a$ denote the number of classes of type $1$, $c$ the number of classes of type $2$ and $b$ the number of classes of type $3$. We thus have $a, b, c \geq 0$, $a+b+c=d$ and assume that classes $\mathcal{C}_1, \ldots, \mathcal C_a$ are of type $1$, classes $\mathcal{C}_{a+1}, \ldots, \mathcal C_{a+c}$ are of type $2$, and the 
remaining classes $\mathcal{C}_{a+c+1}, \ldots, \mathcal C_{a+b+c}$ are of
the third type. $\mathcal{C}_{a+c+1}, \ldots, \mathcal C_d$ are
ordered such that $\mathcal{C}_i \to \mathcal{C}_j$ implies $i \le
j$. Note that $a+c\ge 1$. 
With respect to this ordering of the colours, the matrix $R$ takes the following lower triangular block structure:
\begin{eqnarray*}
R=\left(
  \begin{array}{cccccccccc}
    T_{1,1} &  & & & &&&&&\\
       &  \ddots  &  & & &&&&&\\
       &  &  T_{a,a} & & &&&&&\\
      & & & P_{1,1} &  & & & &&\\
       &  && \ast &\ddots  &  & & &&\\
       & && \ast & \ast & P_{b,b} & &  &  &\\
  &  & & \ast& \ast & \ast & Q_{1,1} &  & &\\
     &  && \ast& \ast & \ast & &\ddots  & & \\
       & && \ast &\ast & \ast & &  &  &  Q_{c,c} 
  \end{array}\right),
\end{eqnarray*}

Blocks
$T_{1,1}, \ldots, T_{a,a}$ correspond to type $1$ classes $\mathcal{C}_1, \ldots, \mathcal C_a$, blocks
$Q_{1,1}, \ldots, Q_{c,c}$ to type $2$ classes $\mathcal{C}_{a+1}, \ldots, \mathcal C_{a+c}$ and blocks $P_{1,1},
\ldots, P_{b,b}$ to type $3$ classes $\mathcal{C}_{a+c+1}, \ldots, \mathcal C_d$.
The middle part corresponding to type 3 classes is a lower triangular
block matrix in which 
beneath each of the blocks $P_{1,1}, \ldots, P_{b,b}$, there is at least one positive entry. Similarly, to the left
of each block $Q_{1,1},
\ldots, Q_{c,c}$, there is at least one positive entry. 

\begin{rem} (A$3$) implies that we start with at least one ball from each class of type $1$.
\end{rem}

{\em The spectrum of R.} The main point about the normal form is that all diagonal blocks are irreducible ML--matrices. Irreducible ML--matrices enjoy useful spectral properties:

\begin{thm}[\cite{Gou, Sen}]\label{MLvalues}
Let $B=(B_{i,j})_{1\leq i,j \leq q}$ be an irreducible Metzler--Leontief matrix. Then,
there exists an eigenvalue $\tau$ of $B$ such that
\begin{enumerate}
\item[(i)] $\tau$ is real, has algebraic and geometric multiplicity $1$ and the associated
  left and right eigenvectors can be chosen to have positive components;
\item[(ii)] $\tau > \Re(\lambda)$ where $\lambda \not= \tau$ is any
  eigenvalue of $B$;
\item[(iii)] $\min_j \sum_{i=1}^q B_{i,j} \leq \tau \leq \max_j \sum_{i=1}^q
  B_{i,j}$;
\item[(iv)] $\sum_{i=1}^q B_{i,j} \leq x$ for all $j$ with at least one
  strict inequality implies $\tau < x$ ($x \in \mathbb{R}$).
\end{enumerate}
\end{thm}

The spectral properties of $R$ relate to the spectral properties of its irreducible diagonal blocks in the following way. First, the eigenvalues of $R$ are given by the union of the eigenvalues of $T_{1,1}, \ldots,
T_{a,a}$, $P_{1,1}, \ldots, P_{b,b}, Q_{1,1}, \ldots, Q_{c,c}$, and we may now formulate the last assumption that is needed for Theorem \ref{fluc}, which is
\begin{enumerate}[itemsep =0pt]
\item[(A$4$)] Every submatrix $T_{1,1}, \ldots,
T_{a,a}$, $P_{1,1}, \ldots, P_{b,b}, Q_{1,1}, \ldots, Q_{c,c}$ is diagonisable over $\mathbb{C}$. If there is only one dominant class, we additionally assume its eigenvalues are pairwise distinct.
\end{enumerate}
This assumption is essential for our proof technique, but still satisfied in many applications.  

Due to assumption (A$4$), $R$ is diagonisable over $\Cset$ and thus has $q$ eigenvalues $\lambda_1,\ldots, \lambda_q \in \Cset$, if each eigenvalue is counted according to its (algebraic $=$ geometric) multiplicity.
As the columns of $T_{1,1}, \ldots,
T_{a,a}$, $Q_{1,1}, \ldots, Q_{c,c}$ sum to $r$, and the columns of
$P_{1,1}, \ldots, P_{b,b}$ sum to less than $r$, Theorem \ref{MLvalues} asserts the existence of  exactly $a+c$ ``dominant'' eigenvalues, and we impose the order 
\begin{align} \label{Order_eig}
r = \lambda_1 = \cdots = \lambda_{a+c} > \Re(\lambda_{a+c+1}) \geq
\cdots \geq \Re(\lambda_{q}). 
\end{align}
on the $q$ eigenvalues of $R$. 
Non--dominant eigenvalues with 
equal real part
are ordered by decreasing size of imaginary parts. If eigenvalue
$\lambda$ has multiplicity $m >1$, $\lambda$ is repeated $m$ times in
this list. 

We may then choose a basis $\{u_1, \ldots, u_q\}$ of $\Cset^q$ with the following properties:
\begin{itemize}
\item[(B$1$)] For all $i=1, \ldots, q$, $u^\ast_iR = \lambda_i u_i^\ast$. That is, $u_1^\ast, \ldots, u_q^\ast$ are left eigenvectors of $R$.
\item[(B$2$)] The vectors corresponding to eigenvalue $r$ take the following form: With the notation from the end of the introduction, 
\begin{align*}
u_i = \mathbf{1}_{\mathcal{C}_i} \hspace{1 cm} \text{for} \hspace{0.2 cm}  i = 1, \ldots, a. 
\end{align*}
For $s=1,\ldots, c$, there is a vector $v_s \in \mathbb{R}^q$ which is only non--zero on colour classes of type $3$ 
and
\begin{align*} 
u_{a + s} = \mathbf{1}_{\mathcal{C}_{a+b+s}} + v_s,
\end{align*}
such that $u_{a+1}, \ldots,
u_{a+c}$ are orthogonal. 
Thus, $u_1, \ldots, u_{a+c}$ are orthogonal. 
\item[(B$3$)] Generally, basis vectors come from eigenvectors of blocks that are extended to eigenvectors of $R$ in the following way:  
\begin{itemize}
\item[(i)] If $\lambda$ is an eigenvalue of multiplicity $m$ of $T_{i,i}$, $1 \leq i \leq a$, the $m$ corresponding left basis vectors are zero on every index outside $T_{i,i}$. 
\item[(ii)] If $\lambda$ is an eigenvalue with multiplicity $m$ of 
$P_{i,i}$, $1 \leq i \leq b$, the $m$
corresponding basis vectors are zero on
colours in type $1$ and type $2$ classes.
\item[(iii)]  Similarly, if $\lambda$ is an eigenvalue with multiplicity $m$ of 
$Q_{i,i}$, $1 \leq i \leq c$, the corresponding $m$
 basis vectors are 
 zero on
all colours in type $1$ blocks, in type $2$ blocks $Q_{j,j}$ for $j
\in \{1, \ldots, c\}\setminus \{i\}$ 
\end{itemize}
\item[(B$4$)] Eigenvectors corresponding to real eigenvalues are chosen to have real components. Moreover, if $\lambda_k \in \Cset \setminus \Rset$ is an
eigenvalue with corresponding eigenvector $u_k^\ast$, then for $\lambda_\ell = \bar{\lambda}_k$ we choose $u_\ell^\ast=\bar{u}_k^\ast$. 
\end{itemize}
Let  $\{v_1, \ldots, v_q\}$ be the basis dual to $\{u_1, \ldots, u_q\}$, i.e., for all $i=1, \ldots, q$,
\begin{align*}
u_i^\ast v_i = \delta_{ij}.
\end{align*}
The basis $\{v_1, \ldots, v_q\}$ has the following analoguos properties: 
\begin{itemize}
\item For all $i=1, \ldots, q$, $Rv_i = \lambda_i v_i$. That is, $v_1, \ldots, v_q$ are right eigenvectors of $R$.
\item Eigenvectors corresponding to real eigenvalues are chosen to have real components. Moreover, if $\lambda_k \in \Cset \setminus \Rset$ is an
eigenvalue with corresponding eigenvector $v_k$, then for $\lambda_\ell = \bar{\lambda}_k$, $v_\ell=\bar{v}_k$. 
\item  If $\lambda$ is an eigenvalue of multiplicity $m$ of $T_{i,i}$, $1 \leq i \leq a$, the $m$ corresponding basis vectors are zero on every index outside $T_{i,i}$ 
. If $\lambda$ is an eigenvalue with multiplicity $m$ of 
$P_{i,i}$, $1 \leq i \leq b$, the $m$
corresponding basis vectors are zero on
colours in type $1$. 
Similarly, if $\lambda$ is an eigenvalue with multiplicity $m$ of 
$Q_{i,i}$, $1 \leq i \leq c$, the corresponding $m$
 basis vectors 
 are zero on
all colours outside $Q_{i,i}$. 
\end{itemize}

With this particular choice of bases $\{u_1, \ldots, u_q\}$ and $\{v_1, \ldots, v_q\}$, we can decompose $R$ as 
\begin{align*}
R = \lambda_1 v_1 u_1^\ast + \cdots + \lambda_q  v_q u_q^\ast.
\end{align*}
Moreover, let $\pi_{k}: \Cset^q \to \Cset$ be the linear map defined by
\begin{align*}
\pi_k(v) := u_k^{\ast}  v.
\end{align*}

\subsection{A central limit theorem}

Let us begin with a well--known result on strong convergence of proportions. 
Assumption (A$1$) ensures that at any time, the total number of balls in the urn is deterministic and given by $|X_n|=|X_0|+rn$. The asymptotic share of the $q$ colours in these $|X_0|+rn$ balls is the content of the following theorem.

\begin{thm}[{\cite[Theorem 3.1]{Gou}} and {\cite[Theorem 3.5]{Pou}}] \label{Assi}
Suppose that (A$1$) and (A$2$) hold and
that $c=1$. Then, as $n \to \infty$,
\begin{align}\label{Dirichlet_limit}
\frac{X_n}{rn+|X_0|} \to \sum_{i=1}^a D^{(i)} v_i + D^{(a+1)}v_{a+1} \qquad \text{a.s.},
\end{align}
where $\left(D^{(1)}, \ldots, D^{(a)},
D^{(a+1)}\right)^t$ is Dirichlet distributed with parameter
\begin{align*}
\theta = \left(\frac{|(X_{0})_{\mathcal{C}_1}|}{r}, \ldots, \frac{|(X_{0})_{\mathcal{C}_a}|}{r}, \frac{|X_0|}{r}
- \sum_{j=1}^a \frac{|(X_{0})_{\mathcal{C}_j}|}{r} \right).
\end{align*}
\end{thm}

In order to develop an intuitive understanding of Theorem \ref{Assi}, it is fruitful to  compare it to two special cases, namely an irreducible urn and a P\'olya urn. In the first case, there is only one irreducible class and the long--time proportion of each colour in the urn composition is deterministically given by the corresponding component of the properly normalised dominant eigenvector, irrespective of the initial configuration. In the second case, however, each colour forms its own irreducible type $1$ class. The asymptotic proportions are given by a Dirichlet distributed random vector, which is highly sensitive to the initial configuration. 
Theorem \ref{Assi} locates the asymptotics of a more general urn model in between the two special cases: The dominant classes $\mathcal{C}_1, \ldots, \mathcal{C}_a, \mathcal{C}_{a+1}$ act as ``supercolours'' in a P\'{o}lya urn; there is no exchange of balls between them. In line with our preceding observations,
the asymptotic proportions among these supercolours are
Dirichlet distributed. On the other hand, the way in which each Dirichlet component further splits among the colours of a particular dominant and irreducible class are
deterministic and given by the components of the right
eigenvector corresponding to the class. Finally, the asymptotic proportions of balls of non--dominant classes are zero almost surely.

However, Theorem \ref{Assi} only covers the case $c=1$. In the more general case $c \in \Nset$, the above result makes it plausible that 
\begin{align*}
\frac{X_n}{rn+|X_0|} \to \sum_{i=1}^a D^{(i)}v_i + D^{(a+1)}(\Gamma_{a+1}v_{a+1} +
  \cdots + \Gamma_{a+c}v_{a+c}) 
\end{align*}
almost surely, where $\left(D^{(1)}, \ldots, D^{(a)},
D^{(a+1)}\right)^t$ is Dirichlet distributed with parameter $\theta$ as in
Theorem \ref{Assi}. $\Gamma_{a+1}, \ldots, \Gamma_{a+c}$ are random
variables that sum to $1$ almost surely and are independent of the Dirichlet
random vector. Intuitively,  the random variables $\Gamma_{a+1},
\ldots, \Gamma_{a+c}$ are the
asymptotic proportions of the non--isolated dominant classes inside supercolour $\mathcal{C}_{a+1} \cup
\ldots \cup \mathcal{C}_d$. A proof of this result can easily be obtained along the
lines of the proofs given in the next section, and we omit the details
here and move on to our main theorem.

In the following, we work with the centered sequence of urn compositions: For $n \geq 0$, set 
\begin{align*}
Y_n := X_n - \E[X_n].
\end{align*}
The random vector $V$ denotes the
almost sure limit of the proportions $\frac{X_n}{rn+|X_0|}$,
\begin{align*}
V:= \sum_{i=1}^a D^{(i)}v_i + D^{(a+1)}(\Gamma_{a+1}v_{a+1} +
  \cdots + \Gamma_{a+c}v_{a+c}).
\end{align*}
Recall that
it is zero in all type $3$ components. Finally, we define the matrix
\begin{align*}
M :=  \left(\Re(v_1), -\Im(v_1), \Re(v_2), -\Im(v_2), \ldots,
  \Re(v_q), -\Im(v_q) \right) \in \Rset^{q \times 2q}.
\end{align*}


\begin{thm}\label{fluc}
Suppose that  assumptions (A$1$) - (A$4$) hold, the eigenvalues of $R$ are ordered as in (\ref{Order_eig}), $\{u_1, \ldots, u_q\}$ is a basis of $\mathbb{C}^{q}$ that satisfies (B$1$) - (B$4$) and $\{v_1, \ldots, v_q\}$ is the dual basis. Let
$p:=\max\{k \in \{1, \ldots, q\}: \Re(\lambda_k)/r
> 1/2\}$. There are two cases:
\begin{enumerate}
\item Suppose that for all $\lambda_k$ that belong to a dominant class, $\Re(\lambda_k)
  \not= r/2$. Then there exist complex--valued mean zero random variables $\Xi_1,
  \ldots, \Xi_{p}$ such that
\begin{align}
\frac{1}{\sqrt{n}}\left(Y_n - \sum_{k=1}^p
  n^{\frac{\lambda_k}{r}}\Xi_kv_k \right)
  \stackrel{\mathcal{L}}{\longrightarrow} \mathcal{N}(0, A_V)
\end{align}
as $n \to \infty$, where $\mathcal{N}$ denotes a non--degenerated,
centered multivariate Gaussian mixture
distribution with latent distribution $\mathcal{L}(V)$ and
covariance matrix 
\begin{align*}
A_V := & M  \Sigma_V M^t,
\end{align*}
 where $\Sigma_V$ is
defined in (\ref{Sigma1}) to (\ref{Sigma4}) below Theorem
\ref{CW}. Furthermore, $(A_V)_{i,i}>0$
almost surely for dominant colours $i$, whereas $(A_V)_{i,i}=0$ almost
surely for non dominant colours $i$.
\item Suppose that there is some $\lambda_k$ that belongs to a dominant class such that $\Re(\lambda_k)
  = r/2$. Then there exist complex--valued mean zero random variables $\Xi_1,
  \ldots, \Xi_{p}$ such that
\begin{align}
\frac{1}{\sqrt{n\log n}}\left(Y_n - \sum_{k=1}^p
  n^{\frac{\lambda_k}{r}}\Xi_kv_k \right)
  \stackrel{\mathcal{L}}{\longrightarrow} \mathcal{N}(0, A_V)
\end{align}
as $n \to \infty$, where $\mathcal{N}$ denotes a non--degenerated,
centered multivariate Gaussian mixture
distribution with latent distribution $\mathcal{L}(V)$ and
covariance matrix 
\begin{align*}
A_V := & M  \Sigma_V M^t,
\end{align*}
 where $\Sigma_V$ is
defined in (\ref{Sigma5}) below Theorem \ref{CW}. $(A_V)_{i,i}>0$
almost surely for dominant colours $i$ that belong to the irreducible
classes of eigenvalues with real part $r/2$, whereas $(A_V)_{i,i}=0$ almost
surely for all other colours.
\end{enumerate}
\end{thm}

\begin{rem}\label{irr_case} Central limit theorems in the case where the dominant eigenvalue $r$ is simple and $\Re(\lambda_k) \le r/2$ for all other eigenvalues $\lambda_k$ are well--known, see e.g. \cite{J} and \cite{S}. 
\end{rem}

Results on urn models are usually stated separately for three main classes: First, models where the rescaled composition vector converges to a deterministic limit and is asymptotically normally
distributed (e.g., $m-$ary search tree for $m \leq 26$), second, where it converges almost surely to a random limit
(e.g., P\'olya urn) and third, where it exhibits almost sure oscillating
behaviour (e.g., $m-$ary search tree for $m \geq 27$ or cyclic urn for
$m \geq 7$ colours). Theorem \ref{fluc} provides a common framework for these cases (and others). Its main structural content is the following: 

First, the centering of the urn composition may be random in order to obtain a central limit theorem. Whether it is random or not, depends on whether $\Re(\lambda_2)>r/2$, where the possibility that $\lambda_2=r$ is included. If $\Re(\lambda_2)\leq r/2$, the centering is deterministic, corresponding to Remark \ref{irr_case}. Otherwise, it is random. The random variables that need to be substracted arise from non--negligible drifts in the urn composition that surpass the normal $\sqrt{n}$ or $\sqrt{n \log n}$ fluctuation.

Second, the magnitude of the fluctuation is determined by the existence of eigenvalues $\lambda_k$ belonging to a dominant class such that $\Re(\lambda_k)
  = r/2$. If none such eigenvalue exists, the fluctuation is of order $\sqrt{n}$. Otherwise, it is of order $\sqrt{n \log n}$. In the case of a simple dominant eigenvalue $r$, this is a well--known phenomenon, see \cite{J}.
  
 Third, the limiting distribution that arises is not necessarily Gaussian, but rather mixed Gaussian with random covariance structure. The covariance matrix of the limiting Gaussian distribution depends on the components of $V$, the asymptotic proportions of supercolours. Again, this behaviour basically transfers from a central limit theorem for the P\'olya urn by regarding irreducible dominant classes as supercolours.
 
Finally, the special form $A_V =  M  \Sigma_V M^t$ arises from the fact that we work with projections of the urn process that are transformed back to the original process by the linear transformation $M$. The matrix $\Sigma_V$ is given in Section \ref{Sec_proof} and also depends on the choice of basis, but has interesting structural properties.

\subsection{Applications}

To illustrate the statement of Theorem \ref{fluc}, we give four examples that cover both urns with $\Re(\lambda_2) >r/2$ and with $\Re(\lambda_2)\leq r/2$. These examples are particularly interesting, as they are subject to a phase change when parametrized as below. Subsections \ref{Polyaurn} and \ref{Friedmanurn} are covered in the existing literature, while the results in subsections \ref{m-ary} and \ref{B-tree} are new. In particular, subsection \ref{m-ary} refines the asymptotics of the size of $m$-ary search trees, an important subject in the probabilistic analysis of algorithms started by Knuth \cite{Knu3}. 

\subsubsection{P\'olya urn} \label{Polyaurn}
Consider the P\'olya urn with $q \geq 2$ colours and matrix
$R=r \cdot\text{Id}_{\mathbb{R}^q}$. The eigenvalues of $R$ are given by $\lambda_1 = \cdots =\lambda_q=r$, and in particular, $r$ is a multiple eigenvalue.  We choose $\{u_1, \ldots, u_q\}$ to be the canonical basis of $\mathbb{C}^q$. This choice obviously satisfies (B$1$) to (B$4$) and yields $v_i=u_i$ for $i=1, \ldots, q$. With initial configuration $X_0 \in \mathbb{N}^q$, $X_0^{(i)} >0$ for $i=1, \ldots, q$, Theorem \ref{Assi} implies that
\begin{flalign*}
&& \frac{X_n}{rn+|X_0|} \stackrel{\text{a.s.}}{\longrightarrow} V, && n \to \infty,
\end{flalign*}
where $V=\left(V^{(1)}, \ldots, V^{(q)} \right)^t$ is a random Dirichlet vector with parameter $\theta$ as in Theorem \ref{Assi}. Moreover, the evaluation of $\Sigma_V$ as in Theorem \ref{fluc} yields the following central limit theorem:
\begin{align*}
\frac{1}{\sqrt{r^2n}}\left(X_n - nV\right) \stackrel{\mathcal{L}}{\longrightarrow} \mathcal{N}\left(0, \begin{pmatrix}
V_1(1-V_1) & -V_1V_2& \cdots & -V_1V_q \\
-V_1 V_2 & V_2(1-V_2) &\cdots & -V_2V_q \\
\vdots &        \vdots     &   \ddots      & \vdots\\
-V_1V_q & -V_2V_q & \cdots & V_q(1-V_q)
\end{pmatrix} \right).
\end{align*} 
Note that because the almost sure limit of $X_n/(rn+|X_0|)$ is random, a mixed normal
distribution arises, as noted below Theorem \ref{fluc}. 
It is instructive to bear this covariance structure in mind, as it is the basis for generating matrices with a more complex eingevalue structure. For $q=2, r=1$ and initial configuration $X_0=(1,1)^t$, we recover a result of Hall and Heyde \cite{HH} p. $80$.


\subsubsection{Friedman's urn} \label{Friedmanurn}
As a two--colour extension of the previous example, consider Friedman's urn
with generating matrix 
\begin{align*}
R= \left(\begin{array}{cc}
\alpha & \beta \\
\beta & \alpha
\end{array}\right),
\end{align*}
where $\alpha, \beta \in \Zset$, $\alpha \geq -1$, $\beta \geq 0$, and
$\alpha + \beta = r >0$. This symmetric matrix has real eigenvalues $\lambda_1:=
\alpha+\beta$ and $\lambda_2:=\alpha-\beta$. They are distinct unless $\beta = 0$, which is the original P\'olya
urn from the previous example. For $\beta > 0$, the urn is irreducible and we choose eigenvectors 
\begin{align*}
u_1:=\left(\begin{array}{c}
1\\
1
\end{array} \right), \hspace{0.3 cm} u_2:=
\left(\begin{array}{c}
1\\
-1
\end{array} \right)
\end{align*}
which satsify (B$1$) to (B$4$). This choice yields
\begin{align*}
v_1:=\frac{1}{2}\left(\begin{array}{c}
1\\
1
\end{array} \right), \hspace{0.3 cm} v_2:=
\frac{1}{2}\left(\begin{array}{c}
1\\
-1
\end{array} \right).
\end{align*}
Within the case $\beta>0$, three different limiting scenarios arise, corresponding to three different positions of the second largest eigenvalue with respect to $\lambda_1/2$: If $\alpha < 3\beta$, then $\lambda_2 < \lambda_1 /2$, and 
\begin{align*}
A_V &= M \Sigma_V M^t
=\begin{pmatrix} \frac{1}{2} & 0 & \frac{1}{2} & 0 \\
\frac{1}{2} & 0 & -\frac{1}{2} & 0 
\end{pmatrix}
                                 \frac{(\alpha+\beta)(\alpha-\beta)^2}{3\beta-\alpha} \begin{pmatrix}
                                   0 & 0 & 0 & 0\\ 0 & 0 & 0 & 0\\
                                 0 & 0 & 1 & 0\\
                               0 & 0 & 0 & 0\\
\end{pmatrix}\begin{pmatrix}
\frac{1}{2} & \frac{1}{2}\\
0 & 0 \\
\frac{1}{2} & -\frac{1}{2} \\
0 & 0
\end{pmatrix}\\
 &= \frac{(\alpha + \beta)(\alpha-\beta)^2}{4(3\beta-\alpha)}\begin{pmatrix} 1 & -1 \\ -1 &
    1 \end{pmatrix},
\end{align*}
so
\begin{flalign*}
&& \frac{Y_n}{\sqrt{n}} \stackrel{\mathcal{L}}{\longrightarrow}
  \mathcal{N}\left(0, \frac{(\alpha + \beta)(\alpha-\beta)^2}{4(3\beta-\alpha)}\begin{pmatrix} 1 & -1 \\ -1 &
    1 \end{pmatrix}\right), && n \to \infty.
\end{flalign*}
If $\alpha = 3\beta$, then
$\lambda_2 = \lambda_1 /2$, a very similar calculation leads to
\begin{flalign*}
&& \frac{Y_n}{\sqrt{n\log n}} \stackrel{\mathcal{L}}{\longrightarrow}
  \mathcal{N}\left(0, \frac{(\alpha-\beta)^2}{4}\begin{pmatrix} 1 & -1 \\ -1 &
    1 \end{pmatrix}\right),&& n \to \infty.
\end{flalign*}
The preceding two central limit theorems were obtained in \cite{Bernstein, Free} and appear as example
$3.27$ in \cite{J}.

Finally, if $\alpha > 3\beta$, 
$\lambda_2 >\lambda_1 /2$, and a random centering yields
\begin{flalign*}
&& \frac{Y_n - n^{\frac{\alpha-\beta}{\alpha+\beta}}\Xi_2 v_2}{\sqrt{n}}
  \stackrel{\mathcal{L}}{\longrightarrow} \mathcal{N}\left(0,
  \frac{(\alpha + \beta)(\alpha-\beta)^2}{4(\alpha- 3\beta)}\begin{pmatrix} 1 & -1 \\ -1 &
    1 \end{pmatrix}\right), && n \to \infty,
\end{flalign*}
where $\Xi_2$ is the almost sure limit of the martingale
\begin{align*}
\left(\frac{\Gamma\left(n+\frac{|X_0|}{\alpha + \beta} \right)}{\Gamma\left(n+\frac{|X_0|+\alpha-\beta}{\alpha + \beta} \right)} \left(Y_n^{(1)} - Y_n^{(2)}\right) \right)_{n \geq 0}.
\end{align*}
In all three cases, due to the fact that for $\beta > 0$, $\lambda_1=r$ is simple, no mixing over normal distributions arises.

\subsubsection{\texorpdfstring{$m$}{}-ary search tree} \label{m-ary}
Used for searching and sorting of linearly ordered data, $m$-ary search trees are fundamental data structures in computer science, cf.~ \cite{Knu1, Knu3, M2}. Each node of an $m$-ary search tree may contain $0$ to $m-1$ keys, and we refer to a node containing $i \in \{0, \ldots, m-2\}$ keys as a node of type $i$. It is observed in \cite{M1} that the joint evolution of a linear transformation of the various node types of an $m$-ary search tree generated by the uniform permutation model can be regarded as an urn model. More precisely, if $X_n^{(i)}, i=0 , \ldots, m-2$, denotes $(i+1)$ times the number of nodes of type $i$ after the insertion of $n$ keys, the dynamics of the vector $X_n$ are given by an urn model with $X_0=(1,0, \ldots, 0)^t$ and irreducible generating matrix
\begin{eqnarray*}
R_m=\left(
  \begin{array}{ccccc}
    -1 & 0 & &  & m\\
     2  & -2  &  & & \\
       &3 & -3 & & \\
       &  &  & \ddots  & \\
      & & & m-1& -(m-1)
  \end{array}\right).
\end{eqnarray*}
The general results on irreducible urn models imply that  
\begin{flalign*}
&& \frac{X_n}{n+1} \stackrel{a.s.}{\longrightarrow} \frac{1}{H_m-1}
  \left(\frac{1}{ 2}, \frac{1}{3}, \ldots,
  \frac{1}{ m} \right)^t, && n \to \infty,
\end{flalign*}
where $H_m$ denotes the $m-$th Harmonic number, cf.~ \cite[equation $(17)$]{CP}. The $m-1$ simple eigenvalues of $R_m$ are given by the solutions of the
equation
\begin{align*}
m! = \prod_{k=1}^{m-1} (z+k).
\end{align*}
As the dominant eigenvalue $1$ is simple for all $m$, we
expect weak convergence with a non--mixed Gaussian limit when searching for central limit theorems. Indeed, if $m \leq 26$, there are no eigenvalues with real
part greater than $1/2$. In this case, $(X_n-\E[X_n])/\sqrt{n}$ converges to a normal law in distribution, and Theorem \ref{fluc} therefore confirms
the well--known result that the limiting distribution of the normalized space requirement of the $m$-ary search tree is asymptotically normally distributed, cf.~ \cite{MahPit, LewMah}. The convergence also is an immediate consequence of the aforementioned results on irreducible urn models. 
For $m > 26$, there is at least one eigenvalue with real part greater
than $1/2$ and it is known that for all such $m$, there is no
eigenvalue whose real part is equal to $1/2$. Chern and Hwang
\cite{ChHw} prove that when $m \geq 27$, the space requirement
centered by its mean and scaled by its standard deviation does not
have a limiting distribution. However, \cite{CP} show that in this case, the vector of the node types can be almost surely approximated by an oscillating sequence with random amplitude and phase shift, and for the corresponding approximation of the total number of nodes, \cite{Fill} identify the distribution of the arising complex--valued random variable by a stochastic fixed point equation. In line with these results, Theorem \ref{fluc} yields a normal fluctuation about the known strong approximations,
\begin{align*}
\frac{1}{\sqrt{n}}\left(X_n - \E[X_n] - \sum_{k = 1}^p
  n^{\lambda_k}\Xi_kv_k \right)
  \stackrel{\mathcal{L}}{\longrightarrow} \mathcal{N}(0, A_V).
\end{align*}
Nevertheless, in order to derive a central limit theorem, more and more oscillating terms need to be substracted as $m$ grows. See \cite{MN} for a similar result.

\subsubsection{B-tree} \label{B-tree}
Another example of a search tree is the so--called B-tree, introduced by Bayer and McCreight \cite{Bay1, Bay2}. Yao \cite{Yao} observed that the fringe nodes of a B-tree generated by the random permutation model can be regarded as a P\'olya urn, an aspect which is further developed in \cite{CP2}. To formulate the urn process, it is necessary to specify an algorithm by which the B-tree is generated. Chauvin, Gardy, Pouyanne and Ton-That \cite{CP2} consider two algorithms, called ``prudent'' and ``optimistic''. For the purpose of illustrating Theorem \ref{fluc}, we only state the results in the ``optimistic'' case, even though everything holds for the prudent case as well. Similar to the $m$-ary search tree, the B-tree is defined by a parameter $m \in \mathbb{N}_{\geq 2}$ which determines the capacity of the nodes. The nodes of the tree, whose only descendants are leaves, are called fringe nodes and have different types, depending on how many keys they contain. Let $(X_n)_n$ be the gap process of the fringe, that is, for $i=1, \ldots, m$, $X_n^{(i)}$ is $(m+i-1)$ times the number of fringe nodes holding $m+i-2$ keys after the insertion of $n$ keys.  
The process $(X_n)_n$ can be regarded as a P\'{o}lya urn with $X_0=(m,0,\ldots,0)^t$ and irreducible
generating matrix
\begin{eqnarray*}
R_m=\left(
  \begin{array}{ccccc}
    -m &  & &  & 2m\\
     m+1  & -(m+1)  &  & & \\
       &m+2 & -(m+3) & & \\
       &  &  & \ddots  & \\
      & & & 2m-1& -(2m-1)
  \end{array}\right).
\end{eqnarray*}
The $m$ simple eigenvalues of $R_m$ are given by the solutions of the
equation
\begin{align*}
\frac{2m!}{m!} = \prod_{k=m}^{2m-1} (z+k).
\end{align*}
Left and right eigenvectors of $R_m$ are explicitly calculated in \cite[equation $(9)$]{CP2}. If $m \leq 59$, there are no eigenvalues with real
part greater than $1/2$. In this case, the results of \cite{J,S} yield that
\begin{flalign*}
&& \frac{X_n - \E[X_n]}{\sqrt{n}} \stackrel{\mathcal{L}}{\longrightarrow} \mathcal{N}(0,A_V), && n \to \infty.
\end{flalign*}
For $m \ge 60$ however, there are eigenvalues with real
part greater than $1/2$, and again, $(X_n)_n$ can be almost surely approximated by an oscillating sequence with random amplitude and phase shift \cite{CP2}. Theorem \ref{fluc} refines this result by stating that
\begin{flalign*}
&& \frac{1}{\sqrt{n}}\left(X_n - \E[X_n] - \sum_{k = 1}^p
  n^{\lambda_k}\Xi_kv_k \right)
  \stackrel{\mathcal{L}}{\longrightarrow} \mathcal{N}(0,A_V), && n \to \infty.
\end{flalign*}
Of course, the evaluation of $A_V$ uses the eigenvalues (and eigenvectors) of $R_m$, which take no simple form in this example. However, as the dominant eigenvalue $1$ is simple, there is no mixed normal distribution. See \cite{CP2} for more information on the definition of the optimistic algorithm and properties of $\Xi_2$ ($\Xi_1=0$ here). 

\section{Proof of Theorem \ref{fluc}} \label{Proof}
\subsection{Projections and martingales}
Key to the proof of Theorem
\ref{fluc} is an understanding of the asymptotics of the components of $RY_n$ in the decomposition
\begin{align*}
R Y_n = \lambda_1 v_1 \pi_1(Y_n) + \ldots + \lambda_q v_q \pi_q(Y_n).
\end{align*}
More precisely, we will study the scalar projection coefficients
$\pi_k(X_n)$ via martingale techniques, which have played an important role
in the analysis of urn models for a long time. To do so, we begin with the non--centered sequence $(X_n)_{n \geq 0}$ and denote the canonical filtration of the urn process by
\begin{align*}
\mathcal{F}_n:=\sigma(X_0, \ldots, X_n).
\end{align*}
It is immediate that
\begin{align} \label{bed}
\E[X_{n+1} | \mathcal{F}_n] = \left( \text{Id}_{\Cset^q} +
  \frac{R}{rn+|X_0|}\right) X_n, 
\end{align}
yielding a vector--valued
martingale
\begin{align*}
\left( \prod_{j=N}^{n-1} \left( \text{Id}_{\Cset^q} +
  \frac{R}{rj+|X_0|}\right)^{-1} X_n\right)_{n \ge N}.
\end{align*}
Here, $N \in \Nset$ is chosen sufficiently large such that the occurring
matrix inverses exist. 
It is this particular form of $\E[X_{n+1}|\mathcal{F}_n]$ that leads to complex--valued
martingales via projections on the eigenspaces of $R$. These projection martingales frequently play an important part in the
analysis of urn models, see \cite{Basak, Free, Mailler,
  S,Pou}, just to name a few. Therefore, in this section, to the author's knowledge, only Corollary \ref{variance}
and Lemma \ref{Speed} (in the general setting of the present
article) are original, while the other
results are known and only included to keep the article as self--contained as
possible. The following lemma introduces the projection martingales and
states an asymptotic expansion of their means.

\begin{lem}[Projection martingales] \label{erw} We distinguish two cases:
\begin{itemize}
\item[(i)] If $k \in \{1,
\ldots, q\}$ is such that $\lambda_k$ satisfies
$\lambda_k + |X_0| \notin r\Zset_{-}$ and $n \geq 0$, set 
\begin{align*}
\gamma_n^{(k)} := \prod_{j=0}^{n-1} \left( 1+
  \frac{\lambda_k}{rj+|X_0|}\right) \qquad \text{and} \qquad M^{(k)}_n := (\gamma_n^{(k)})^{-1} \cdot \pi_k(Y_n).
\end{align*}
Then $(M^{(k)}_n)_{n \geq 0}$ is a complex--valued martingale with
mean zero and
\begin{align*}
\E[\pi_k(X_n)] &= \gamma_n^{(k)} \pi_k(X_0) =
                 \frac{\Gamma\left(\frac{|X_0|}{r}\right)
                 \pi_k(X_0)}{\Gamma\left(\frac{|X_0|+\lambda_k}{r}
                 \right)}  \cdot n^{\frac{\lambda_k}{r}}+
O\left(n^{\Re\left(\frac{\lambda_k}{r}\right)-1}\right), \qquad n \to \infty.
\end{align*}
\item[(ii)] If $k \in \{1,
\ldots, q\}$ is such that $\lambda_k$ satisfies $\lambda_k + |X_0| \in
r\Zset_{-}$ and $n \ge -\frac{\lambda_k+|X_0|}{r}+1$, set
\begin{align*}
\gamma_n^{(k)} := \prod_{j=-\frac{\lambda_k+|X_0|}{r}+1}^{n-1} \left( 1+
  \frac{\lambda_k}{rj+|X_0|}\right) \qquad \text{and} \qquad M^{(k)}_n := (\gamma_n^{(k)})^{-1} \cdot \pi_k(Y_n).
\end{align*}
Then $(M^{(k)}_n)_{n \ge -\frac{\lambda_k+|X_0|}{r}+1}$ is a
complex--valued martingale with mean zero and for all $n \ge -\frac{\lambda_k+|X_0|}{r}+1$,
\begin{align*}
\E[\pi_k(X_n)] &= 0.
\end{align*}
\end{itemize}
\end{lem}

\begin{proof}
Let $k \in \{1, \ldots, q\}$ and $n \geq 0$. As a direct
consequence of (B$1$) and (\ref{bed}) for 
all $n \ge 0$,
\begin{align*}
\E[\pi_k(X_{n+1})|\mathcal{F}_n] 
= \left(1+ \frac{\lambda_k}{rn+|X_0|}\right) \pi_k(X_n),
\end{align*}
which implies that $\pi_k(X_n)$ can be normalised to a martingale
as long as $\gamma_n^{(k)}\not=0$. In the case $\lambda_k + |X_0| \in
r\Zset_{-}$ this can be ensured by leaving out the first few steps. Also,
\begin{align*}
\E[\pi_k(X_n)] = \prod_{j=0}^{n-1} \left( 1+
  \frac{\lambda_k}{rj+|X_0|}\right) \pi_k(X_0), 
\end{align*}
which is zero in
the second case. In the first case, by
Stirling's formula,
\begin{align*}
\gamma_n^{(k)} =
 \frac{\Gamma\left(\frac{|X_0|}{r} \right)}{\Gamma\left(\frac{|X_0|+\lambda_k}{r} \right)} \cdot \frac{\Gamma\left(n +\frac{|X_0|}{r} +
  \frac{\lambda_k}{r} \right)}{\Gamma(n+\frac{|X_0|}{r})} =  \frac{\Gamma\left(\frac{|X_0|}{r} \right)}{\Gamma\left(\frac{|X_0|+\lambda_k}{r} \right)} \cdot n^{\frac{\lambda_k}{r}}+ O\left(n^{\Re\left(\frac{\lambda_k}{r}\right)-1}\right)
\end{align*} 
 as $n \to \infty$. This implies the claim.
\end{proof}

\vspace{0.05 cm}
The martingales of the preceding proposition can be divided into two classes: convergent and non
convergent. The corresponding eigenvalues are often referred to as
``big'' and ``small'', respectively. The remainder of this section will be devoted to
properties of the convergent martingales and their limits. 

\begin{lem}[Martingale limits]\label{conv}
For each $k \in \{1, \ldots, q\}$ such that $\Re(\lambda_k)>r/2$, there
exists a complex--valued mean zero random variable $\Xi_k$ such that 
\begin{align*}
M_n^{(k)} \to \frac{\Gamma(|X_0|/r + \lambda_k/r)}{\Gamma(|X_0|/r)}
  \quad \Xi_k
\end{align*}
almost surely and in $L^2$ as $n \to \infty$.
\end{lem}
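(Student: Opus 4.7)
The plan is to realise $\Xi_k$ as (a constant multiple of) the almost sure and $L^2$ limit of the $L^2$-bounded martingale $(M^{(k)}_n)$ furnished by Lemma \ref{erw}. Note first that one only needs to treat case (i) of Lemma \ref{erw}: in case (ii) the eigenvalue $\lambda_k$ equals $-|X_0|+rj$ for some non-positive integer $j$, hence is real and non-positive, and in particular fails the hypothesis $\Re(\lambda_k)>r/2$.

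Concretely, I would first rewrite the martingale increment as
\begin{align*}
M_{n+1}^{(k)}-M_n^{(k)}=\frac{1}{\gamma_{n+1}^{(k)}}\Bigl(\pi_k(X_{n+1})-\bigl(1+\tfrac{\bar\lambda_k}{rn+|X_0|}\bigr)\pi_k(X_n)\Bigr),
\end{align*}
which is centred given $\mathcal{F}_n$ by the computation already performed in the proof of Lemma \ref{erw}. If $I\in\{1,\dots,q\}$ denotes the colour drawn at time $n+1$, then $\pi_k(X_{n+1})-\pi_k(X_n)=u_k^{\ast}\Delta_I$ with $\Prob(I=i\mid\mathcal{F}_n)=X_n^{(i)}/(rn+|X_0|)$. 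Since $\sum_i X_n^{(i)}/(rn+|X_0|)=1$, a one-line second-moment estimate yields
\begin{align*}
\E\bigl[\,|M_{n+1}^{(k)}-M_n^{(k)}|^2\,\big|\,\mathcal{F}_n\bigr]\le \frac{\max_{i}|u_k^{\ast}\Delta_i|^2}{|\gamma_{n+1}^{(k)}|^2}.
\end{align*}
By the asymptotics for $\gamma_n^{(k)}$ established in Lemma \ref{erw}, the right hand side is of order $n^{-2\Re(\lambda_k)/r}$, which is summable precisely because $\Re(\lambda_k)>r/2$.

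Consequently $\sum_n\E|M_{n+1}^{(k)}-M_n^{(k)}|^2<\infty$, so the complex martingale $(M^{(k)}_n)$ is bounded in $L^2$, and I can invoke the $L^2$ martingale convergence theorem (applied separately to its real and imaginary parts) to obtain a complex random variable $\Xi'_k$ such that $M^{(k)}_n\to\Xi'_k$ almost surely and in $L^2$. Since $\E[M^{(k)}_n]=0$ for every $n$ and $L^2$-convergence implies convergence of means, $\E[\Xi'_k]=0$. Setting
\begin{align*}
\Xi_k:=\frac{\Gamma(|X_0|/r)}{\Gamma\bigl((|X_0|+\bar\lambda_k)/r\bigr)}\,\Xi'_k
\end{align*}
gives the displayed normalisation.

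For a lemma of this type the argument is essentially routine; the only genuinely delicate point is checking that the conditional variance of the increment can be bounded uniformly in $n$, i.e.\ that the factor $(rn+|X_0|)^{-1}$ is exactly compensated by the weights $X_n^{(i)}$, so that only $|\gamma_{n+1}^{(k)}|^{-2}\sim n^{-2\Re(\lambda_k)/r}$ remains to drive summability. Once that observation is in place, the threshold $\Re(\lambda_k)>r/2$ appears naturally as the convergence condition, and the identification of the limit constant follows directly from the Stirling asymptotics for $\gamma_n^{(k)}$ already written in Lemma \ref{erw}.
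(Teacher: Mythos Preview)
Your argument is correct and follows the same overall strategy as the paper: establish $L^2$-boundedness of $(M_n^{(k)})$ and invoke the $L^2$ martingale convergence theorem, with the normalisation of $\Xi_k$ read off from the Stirling asymptotics for $\gamma_n^{(k)}$. The only difference is in how $L^2$-boundedness is obtained: the paper sets up a recursion for $\E[|\pi_k(X_n)|^2]$, iterates it to get the bound $O(n^{2\Re(\lambda_k)/r})$, and then divides by $|\gamma_n^{(k)}|^2$; you instead bound the conditional variance of each increment directly by $\max_i|u_k^\ast\Delta_i|^2/|\gamma_{n+1}^{(k)}|^2$ and sum. Your route is slightly more economical---it avoids the recursion and the separate Jensen step---and in fact it is exactly the estimate the paper later uses in Lemma~\ref{Speed} to control the tail $\sum_{j\ge n}\E|M_{j+1}^{(k)}-M_j^{(k)}|^2$. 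Your preliminary remark that case~(ii) of Lemma~\ref{erw} cannot occur under the hypothesis $\Re(\lambda_k)>r/2$ is also correct and worth making explicit.
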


\begin{rem} The limiting random variables $\Xi_k$ yield the centering random variables for Theorem
\ref{fluc}.
\end{rem}

\vspace{0.2 cm}
\begin{proof}
We apply the $L^2$--martingale convergence theorem and show boundedness
of second moments. 
\begin{align*}
\E\left[|\pi_k(X_{n+1})|^2| \mathcal{F}_n \right] &=
\left(1+\frac{2\Re(\lambda_k)}{rn+|X_0|}\right)|\pi_k(X_n)|^2 +
                                                    \sum_{j=1}^q \frac{X_{n}^{(j)}}{rn+|X_0|}|\pi_k(\Delta_j)|^2.
\end{align*}
Set $C_k := \sum_{j=1}^q|\pi_k(\Delta_j)|^2$. With this,
\begin{align*}
\E\left[|\pi_k(X_{n+1})|^2| \mathcal{F}_n \right]
  &\leq\left(1+\frac{2\Re(\lambda_k)}{rn+|X_0|}\right)|\pi_k(X_n)|^2 + C_k
\end{align*}
and thus
\begin{align*}
& \E[|\pi_k(X_n)|^2] \\
&\leq \prod_{j=0}^{n-1}
  \left(1+\frac{2\Re(\lambda_k)}{rj+|X_0|}\right) \E[|\pi_k(X_0)|^2] + C_k
                     \prod_{j=1}^{n-1}
  \left(1+\frac{2\Re(\lambda_k)}{rj+|X_0|}\right) \sum_{m=0}^{n-1}
                     \prod_{j=1}^m
                     \left(1+\frac{2\Re(\lambda_k)}{rj+|X_0|}\right)^{-1}\\
&= \prod_{j=0}^{n-1} \left(1+\frac{2\Re(\lambda_k)}{rj+|X_0|}\right) \left(\E[|\pi_k(X_0)|^2] + C_k
  \left(1+\frac{2\Re(\lambda_k)}{|X_0|}\right)^{-1} \sum_{m=0}^{n-1}
                     \prod_{j=1}^m
                     \left(1+\frac{2\Re(\lambda_k)}{rj+|X_0|}\right)^{-1}\right)\\
&= \bo\left( n^{2\Re(\lambda_k)/r}\right)
\end{align*}
as $n \to \infty$, because $\Re(\lambda_k) > r/2$. 
Thus
\begin{flalign*}
&& \E[|M_n^{(k)}|^2] \leq  |\gamma_n^{(k)}|^{-2} \E\left[|\pi_k(X_n)|^2\right]
 = \bo(1), && n \to \infty.
\end{flalign*}
By
the $L^2$--martingale convergence theorem, $M_n^{(k)}$ converges almost
surely and in $L^2$ to a complex random variable which we write as $\frac{\Gamma(|X_0|/r + \lambda_k/r)}{\Gamma(|X_0|/r)}
  \quad \Xi_k$. 
\end{proof}

\begin{rem}[Asymptotic proportions via martingale limits] Recall the definition of the random proportions $D^{(1)}, \ldots,
D^{(a+1)}\Gamma_{a+1}, \ldots, D^{(a+1)}\Gamma_{a+c}$ in
Theorem \ref{Assi} and below. For $k
\in \{1, \ldots, a\}$, taking the scalar product with $u_k$ in (\ref{Dirichlet_limit}) immediately yields
\begin{align}\label{rel}
\frac{\Xi_k}{r} = D^{(k)} - \frac{\pi_k(X_0)}{|X_0|}.
\end{align}
This is true even for $c>1$, which can be seen by regarding all balls of types $2$ and $3$ as balls of just one equal colour. Therefore, the limiting proportion of balls of types $2$ and $3$ is given by the Dirichlet component $D^{(a+1)}$.
More precisely, the limiting proportion of balls of types $2$ is given by the Dirichlet component $D^{(a+1)}$, as all components of the vectors $v_1, \ldots, v_{a+c}$ in type $3$ colours are zero. On the other hand, Lemma \ref{conv} ensures that the proportions $\pi_k(X_n)/(rn+|X_0|)$ almost surely converge for $k
\in \{a+1, \ldots, a+c\}$, and we may rewrite the limit as a product
\begin{align*}
\frac{\Xi_k}{r} = D^{(a+1)}\Gamma_k - \frac{\pi_k(X_0)}{|X_0|}.
\end{align*}
The claimed independence of $D^{(a+1)}$ and $\Gamma_k$ is straightforward, as the urn process on classes $\mathcal{C}_{a+1}, \ldots, \mathcal{C}_d$ can be regarded as an independent urn process observed at random time steps. 
In total, this yields the representation
\begin{align} \label{V}
V= \sum_{k=1}^{a+c}\left(\frac{\Xi_k}{r} +
  \frac{\pi_k(X_0)}{|X_0|}\right)  v_k 
\end{align}
for the proportion vector $V$ via the martingale limits of Lemma
\ref{conv}. Unfortunately, due to the fact that $\Xi_{a+1}, \ldots, \Xi_{a+c}$ arise as martingale limits, their distribution is not explicit. 
\end{rem}

\begin{rem}
All random variables $D^{(1)}, \ldots,
D^{(a+1)}\Gamma_{a+1}, \ldots, D^{(a+1)}\Gamma_{a+c}$ are strictly
positive almost
surely: This is immediate for $D^{(1)}, \ldots, D^{(a+1)}$. Moreover, we have already argued that for $k
\in \{a+1, \ldots, a+c\}$,
\begin{align*}
 D^{(a+1)}\Gamma_k = \lim_{n \to \infty} \frac{\pi_k(X_n)}{rn+|X_0|} =
  \lim_{n \to \infty}\frac{|(X_n)_{\mathcal{C}_k}|}{rn+|X_0|}
\end{align*}
is the almost sure limit of the proportion of balls in class
$\mathcal{C}_k$. For any given urn model that satisfies (A$1$) to (A$4$), due to (A$3$), with probability one there is a finite time $n$ at which there is at least one ball of each dominant class in the urn.  
But as soon as there is at least one ball of each dominant class in the urn, we may compare the urn process on the type $2$ and type $3$ classes to a classical P\'{o}lya urn, where type $3$ classes are also dominant by disregarding their reinforcement of type $2$ classes: Each time a ball of colour $i$ in a type $3$ class is
drawn, instead of following the original rules, give all its children
outside the class of $i$ colour $i$, too. In the modified urn, the proportions of
colours in type $2$ classes tend to a strictly positive limit almost surely. As
there are at least as many type $2$ balls in this urn as in the
urn with unchanged colours at the same time, the claim follows.
\end{rem}

\begin{cor}[Random limits] \label{variance}
Under conditions (A$1$) to (A$4$), $\Xi_1,
\ldots, \Xi_{a+c}$ are almost surely non--degenerated unless $r$ is
simple. In this case,
$\Xi_1 = 0$. $\Xi_{a+c+1}, \ldots, \Xi_p$ are almost surely non degenerated.
\end{cor}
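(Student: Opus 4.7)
The plan is to split the statement into two cases according to whether $\lambda_k$ lies in a dominant class or not.

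\emph{Dominant eigenvalues ($k \in \{1,\ldots,a+c\}$).} I would read off $\Xi_k$ from the identifications in Remark~4, which together give $\Xi_k = r(\pi_k(V) - \pi_k(X_0)/|X_0|)$. If $r$ is simple then, by (A2), $\mathbf{1}$ is a left eigenvector of $R$ for $r$, and simplicity forces $u_1 = \mathbf{1}$; hence $\pi_1(X_0) = |X_0|$ and $\pi_1(V)=1$ almost surely, so $\Xi_1 = 0$. If $a+c\geq 2$, then $\pi_k(V)$ equals either a marginal $D^{(k)}$ of the Dirichlet distribution of Theorem~\ref{Assi} (for $k\leq a$), which is a non-degenerate Beta random variable, or a product $D^{(a+1)}\Gamma_k$ (for $k\in\{a+1,\ldots,a+c\}$). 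In the latter case either $a\geq 1$, so $D^{(a+1)}$ is already non-degenerate, or $a=0$ and $c\geq 2$, in which case the $\Gamma_k$'s are non-degenerate by the coupling-with-P\'olya-urn argument sketched in Remark~4, applied after condensing each type-$2$ class to a single supercolour.

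\emph{Non-dominant big eigenvalues ($k\in\{a+c+1,\ldots,p\}$).} By Lemma~\ref{conv}, $M_n^{(k)} \to C\,\Xi_k$ in $L^2$ with $C\neq 0$, so it suffices to show that $\V(M_n^{(k)})$, which is non-decreasing in $n$ by the martingale property, does not vanish in the limit. A single increment has conditional variance
\begin{equation*}
\V\bigl(M_{n+1}^{(k)} - M_n^{(k)} \mid \mathcal{F}_n\bigr) = \bigl|\gamma_{n+1}^{(k)}\bigr|^{-2} \sum_{j=1}^{q} \frac{X_n^{(j)}}{rn+|X_0|}\Bigl|\pi_k(\Delta_j) - \bar\lambda_k\frac{\pi_k(X_n)}{rn+|X_0|}\Bigr|^{2}.
\end{equation*}
Since $u_k$ is a nonzero left eigenvector of the type-$3$ block $P_{i,i}$ with eigenvalue $\lambda_k\neq 0$, the values $\pi_k(\Delta_j)=\bar\lambda_k\,\overline{u_k^{(j)}}$ are not all equal across $j$ in the support of $u_k$; combined with (A5) and the irreducibility of $P_{i,i}$, a colour $j$ with $\pi_k(\Delta_j)\neq 0$ is present in the urn with positive probability for all $n$ large enough. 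Taking expectations, summing in $n$ and using $|\gamma_n^{(k)}|^{2}\asymp n^{2\Re(\lambda_k)/r}$ together with the polynomial lower bound on $\E[X_n^{(j)}]$ forced by the block-triangular structure of $R$ and (A5), one obtains a strictly positive lower bound on $\lim_n \V(M_n^{(k)})$ and hence $\V(\Xi_k)>0$.

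I expect the genuinely technical step to be the non-dominant case: the dominant case is essentially algebraic bookkeeping around Remark~4 once Dirichlet non-degeneracy is acknowledged, whereas the non-dominant case requires (i) pinning down the exact polynomial order of $\E[X_n^{(j)}]$ for $j$ inside a type-$3$ class, which depends on the spectral data of the blocks lying above $\mathcal{C}_i$ in the triangular decomposition of $R$, and (ii) ruling out any algebraic coincidence among the numbers $\{\pi_k(\Delta_j)\}$ that could collapse the quadratic variation at the rescaled level. The diagonalisability assumption (A1) and the class-ordering implicit in (A5) are what prevent such coincidences here.
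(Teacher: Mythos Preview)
Your proposal contains the right increment-variance machinery, but there is one outright error, one gap, and the quantitative programme you outline is unnecessary.

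\textbf{The error.} For $k\in\{a+c+1,\ldots,p\}$ you assert that $u_k$ is a left eigenvector of a type-$3$ block $P_{i,i}$. This conflates ``$\lambda_k\neq r$'' with ``$\lambda_k$ belongs to a non-dominant class''. In the paper's main examples ($m$-ary search tree, cyclic urn, B-urns) the urn is irreducible, so every big eigenvalue lives in the unique dominant block and your type-$3$ argument, together with your step~(i) about the polynomial order of $\E[X_n^{(j)}]$ for $j$ in a type-$3$ class, does not apply at all.

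\textbf{The gap.} When $a=0$ and $c\ge 2$ you need $\Gamma_k$ to be genuinely random. Remark~4's coupling only gives $\Gamma_k>0$ a.s., and condensing each type-$2$ class to a supercolour does not yield a classical P\'olya urn, because the type-$3$ classes still feed balls in non-exchangeably; non-degeneracy is therefore not a consequence of that argument as stated.

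\textbf{The paper's shortcut.} After the Dirichlet remark for $k\le a$ (which you reproduce), the paper treats all $k\in\{1,\ldots,p\}$ uniformly: writing
\[
\E[|\Xi_k|^2]=\sum_{j\ge0}\bigl|\gamma_{j+1}^{(k)}\bigr|^{-2}\,\E\Bigl[\bigl|\pi_k(X_{j+1}-X_j)-\tfrac{\bar\lambda_k}{rj+|X_0|}\pi_k(X_j)\bigr|^{2}\Bigr],
\]
vanishing of the left side forces the \emph{per-step almost-sure} identity $\pi_k(X_{j+1}-X_j)=\tfrac{\bar\lambda_k}{rj+|X_0|}\pi_k(X_j)$ for every $j$, i.e.\ the increment is independent of the colour drawn. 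This is contradicted directly: if $\pi_k(X_0)=0$ then inductively $\pi_k(X_j)\equiv0$, so $u_k^{(N_{j+1})}=0$ for every drawn colour and (A5) gives $u_k=0$; if $\pi_k(X_0)\neq0$ then every increment is nonzero, which fails once a ball is drawn from a dominant class on which $u_k$ vanishes, or---when $\lambda_k$ sits in the only dominant class---forces $u_k$ constant on that class and hence $\lambda_k=r$. No growth estimates are needed, and both of your flagged difficulties~(i) and~(ii) dissolve.
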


\begin{proof}
For the type $1$ limits $\Xi_1,
\ldots, \Xi_{a}$, the claim immediately follows from Theorem
\ref{Assi} and (\ref{rel}).

More generally and without reference to Theorem \ref{Assi}, we can use
orthogonality of martingale increments to see that for $k \in \{1, \ldots,
p\}$, 
\begin{align*}
\E\left[|\Xi_k|^2\right] &= \E\left[\left|\Xi_k - M_0^{(k)}\right|^2\right] =
                           \sum_{j=0}^{\infty}
                           \E\left[\left|M_{j+1}^{(k)} -
                           M_j^{(k)}\right|^2 \right]\\
&=\sum_{j=0}^{\infty} \left|\gamma_{j+1}^{(k)}\right|^{-2}
  \E\left[\left|\pi_k(X_{j+1}-X_j) -
  \frac{\lambda_k}{rj+|X_0|}\pi_k(X_j)\right|^2\right] .
\end{align*}
If one of the random variables $\Xi_1, \ldots, \Xi_p$ was almost surely equal to its expectation $0$, say $\Xi_k$, we had
\begin{align*}
\E\left[|\Xi_k|^2\right]=0,
\end{align*}
and thus the evolution of the urn process along projection $k$ would be completely determined by
\begin{align} \label{det}
\pi_k(X_{j+1}-X_j) =
  \frac{\lambda_k}{rj+|X_0|}\pi_k(X_j)
\end{align}
 almost surely for all $j \geq 0$. This in particular means that the
 value of $\pi_k(X_{j+1}-X_j)$ is independent of the colour of the
 $(j+1)$-th ball drawn from the urn. We will argue that this is not
 possible under our assumptions. 
 
First assume that there is an initial configuration $X_0$ that is
compatible with (A$1$) to (A$4$) and has $\pi_k(X_0)=0$. Under
this initial configuration,
$\pi_k(X_j)=0$ for all $j \ge 0$ almost surely because of (\ref{det}).
On the other hand, of $N_{j+1}$ denotes the colour drawn immediately before time $j+1$, we have
\begin{align*}
0 = \pi_k(X_{j+1}-X_j)= \pi_k\left(\Delta_{N_{j+1}} \right) = \lambda_k \left(u_k^\ast\right)^{(\Delta_{N_{j+1}})} 
\end{align*}
for
all $j \geq 0$ almost surely. Now (A$3$) ensures that for each colour $f \in \{1, \ldots, q\}$, there is $n \in \Nset_0$ with
$\Prob(X_{n}^{(f)} > 0) > 0$, which allows to conclude $u_k^{(f)} = 0$. Thus $u_k = 0$, yielding a contradiction.

The last paragraph showed that there is no admissible choice of initial configuration such that
$\pi_k(X_0) = 0$. So (\ref{det}) implies that $\pi_k(X_{j+1}-X_j) \neq
0$ for all $j$ almost surely. But due to our choice of $\{u_1, \ldots, u_q\}$, this is only possible in one particular case: Because $u_k$ is zero on all components that belong to dominant classes different from the class of $\lambda_k$, $\pi_k(X_{j+1}-X_j)=0$ each time a ball from one of these other classes is drawn. However, as the proportion of each dominant class tends to a strictly positive limit almost surely, there is a positive probability of having $\pi_k(X_{j+1}-X_j)=0$, unless there is only one dominant class and $\lambda_k$ belongs to this dominant class. In this case,
there is a time $N$ from which on there
are balls of each colour of the unique dominant class in the urn. This implies that
$u_k^{(i)} = u_k^{(j)}$ for all colours $i,j$ in this class. So $\lambda_k$ is the simple dominant eigenvalue $r$ and no other projection that can induce a deterministic limit.
\end{proof}

\begin{lem}[Speed of convergence] \label{Speed}
Let $k \in \{1, \ldots, q\}$ be such that $\Re(\lambda_k)>r/2$. Then 
\begin{align}
\left\| \frac{\Gamma(|X_0|/r + \lambda_k/r)}{\Gamma(|X_0|/r)}\Xi_k - M_n^{(k)}\right\|_{L^2} = \bo\left(n^{1/2 - \Re(\lambda_k)/r}\right)
\end{align}
as $n \to \infty$.
\end{lem}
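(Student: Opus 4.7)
The plan is to exploit the orthogonality of martingale increments and combine it with the second-moment bound already derived in the proof of Lemma \ref{conv}. Write $L_k := \frac{\Gamma(|X_0|/r + \bar{\lambda}_k/r)}{\Gamma(|X_0|/r)}\Xi_k$ for the $L^2$-limit. Since $(M_n^{(k)})$ is an $L^2$-bounded complex martingale, its increments $M_{j+1}^{(k)} - M_j^{(k)}$ form an orthogonal family in $L^2$, so
\begin{align*}
\bigl\|L_k - M_n^{(k)}\bigr\|_{L^2}^2 = \sum_{j=n}^{\infty} \bigl\|M_{j+1}^{(k)} - M_j^{(k)}\bigr\|_{L^2}^2.
\end{align*}
The task therefore reduces to controlling each increment to sufficient order and summing a convergent tail.

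For the single increment, use the representation already exploited in Corollary \ref{variance}:
\begin{align*}
\bigl\|M_{j+1}^{(k)} - M_j^{(k)}\bigr\|_{L^2}^2 = \bigl|\gamma_{j+1}^{(k)}\bigr|^{-2}\,\E\!\left[\left|\pi_k(X_{j+1}-X_j) - \frac{\bar{\lambda}_k}{rj+|X_0|}\pi_k(X_j)\right|^2\right].
\end{align*}
The first term inside the expectation is bounded deterministically by $K := \max_{1 \le i \le q}|\pi_k(\Delta_i)|$, since $X_{j+1}-X_j$ equals one of the columns $\Delta_i$. For the second term, the bound $\E[|\pi_k(X_j)|^2] = \bo\bigl(j^{2\Re(\lambda_k)/r}\bigr)$ established in the proof of Lemma \ref{conv}, together with the factor $(rj+|X_0|)^{-2}$, yields a contribution of order $j^{2\Re(\lambda_k)/r - 2}$. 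Because all eigenvalues satisfy $\Re(\lambda_k) \le r$, this exponent is non-positive, so both contributions are $\bo(1)$.

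Inserting the asymptotics $|\gamma_{j+1}^{(k)}|^{-2} = \bo(j^{-2\Re(\lambda_k)/r})$ from Lemma \ref{erw}, we obtain
\begin{align*}
\bigl\|M_{j+1}^{(k)} - M_j^{(k)}\bigr\|_{L^2}^2 = \bo\bigl(j^{-2\Re(\lambda_k)/r}\bigr).
\end{align*}
Since $\Re(\lambda_k)/r > 1/2$ by hypothesis, the exponent $-2\Re(\lambda_k)/r$ is strictly less than $-1$, so the tail sum is controlled by an integral comparison:
\begin{align*}
\sum_{j=n}^{\infty} j^{-2\Re(\lambda_k)/r} = \bo\bigl(n^{1 - 2\Re(\lambda_k)/r}\bigr).
\end{align*}
Taking square roots gives the claimed bound $\bo(n^{1/2 - \Re(\lambda_k)/r})$.

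The only real subtlety is verifying that the variance bound on $\pi_k(X_j)$ can be pushed through even when $\Re(\lambda_k)$ is close to $r$; this is the step where it is convenient that all eigenvalues are bounded by $r$, preventing the correction term $\frac{\bar{\lambda}_k}{rj+|X_0|}\pi_k(X_j)$ from dominating. The rest of the argument is a standard bookkeeping exercise with the Gamma-function asymptotics.
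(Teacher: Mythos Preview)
Your proof is correct and follows essentially the same route as the paper: orthogonality of martingale increments, the representation of each increment via $\gamma_{j+1}^{(k)}$, a uniform $\bo(1)$ bound on the inner expectation, and a tail sum of order $n^{1-2\Re(\lambda_k)/r}$. The only minor difference is that the paper expands the conditional variance exactly as $\E[|\pi_k(X_{j+1}-X_j)|^2] - |\lambda_k/(rj+|X_0|)|^2\E[|\pi_k(X_j)|^2]$ and simply drops the negative term, so it never needs the second-moment estimate on $\pi_k(X_j)$ that you invoke; your triangle-inequality bound works just as well but makes the ``subtlety'' you flag in your last paragraph unnecessary.
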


\begin{proof}
We use the decomposition
\begin{align*}
& \left\| \frac{\Gamma(|X_0|/r + \lambda_k/r)}{\Gamma(|X_0|/r)}\Xi_k - M_n^{(k)}\right\|_{L^2}^2 = \sum_{j=n}^{\infty}
                                 \E\left[\left|M_{j+1}^{(k)} -
                                 M_j^{(k)}\right|^2\right] \\
&= \sum_{j=n}^{\infty} \left|\gamma_{j+1}^{(k)}\right|^{-2}
  \left(\E\left[|\pi_k(X_{j+1}-X_j)|^2\right] -\left|\frac{\lambda_k}{rj+|X_0|} \right|^2 \E\left[|\pi_k(X_j)|^2\right] \right)\\
  & \leq \sum_{j=n}^{\infty} \left|\gamma_{j+1}^{(k)}\right|^{-2}\E\left[\left|\pi_k(X_{j+1}-X_j)\right|^2\right] \leq C n^{1-2\Re(\lambda_k)/r}
\end{align*}
as $|\pi_k(X_{j+1}-X_j)|^2$ can only take $q$ values, independently
of $j$.
\end{proof}

\section{Proof of Theorem \ref{fluc}} \label{Sec_proof}
After the separate consideration of projections in the previous section, we now study their joint fluctuations. To this end, recall that $p=\max\{k: \Re(\lambda_k)/r > 1/2\}$ and set 
\begin{eqnarray*}
P_n:=\left(
  \begin{array}{c}
\Re(\pi_1(Y_n) - \gamma_n^{(1)} \frac{\Gamma(|X_0|/r + \lambda_1/r)}{\Gamma(|X_0|/r)}\Xi_1) \\
    \Im(\pi_1(Y_n) - \gamma_n^{(1)}\frac{\Gamma(|X_0|/r + \lambda_1/r)}{\Gamma(|X_0|/r)}\Xi_1) \\
   \Re(\pi_2(Y_n) - \gamma_n^{(2)}\frac{\Gamma(|X_0|/r + \lambda_2/r)}{\Gamma(|X_0|/r)}\Xi_2) \\
    \Im(\pi_2(Y_n) - \gamma_n^{(2)} \frac{\Gamma(|X_0|/r + \lambda_2/r)}{\Gamma(|X_0|/r)}\Xi_2) \\
      \vdots \\
    \Re(\pi_p(Y_n) - \gamma_n^{(p)} \frac{\Gamma(|X_0|/r + \lambda_p/r)}{\Gamma(|X_0|/r)}\Xi_p) \\
   \Im(\pi_p(Y_n) - \gamma_n^{(p)} \frac{\Gamma(|X_0|/r +\lambda_p/r)}{\Gamma(|X_0|/r)}\Xi_p ) \\
      \Re(\pi_{p+1}(Y_n)) \\
     \Im(\pi_{p+1}(Y_n)) \\
\vdots \\
    \Re(\pi_{q}(Y_n)) \\
     \Im(\pi_{q}(Y_n)) 
  \end{array}\right).
\end{eqnarray*}
Theorem \ref{fluc} distinguishes two cases: If there is no dominant $k$ such that $\Re(\lambda_k)/r=1/2$,
we set
\begin{align*}
Z_n := \frac{1}{\sqrt{n}} P_n.
\end{align*} 
If there is such a $k$, we normalise to
\begin{align*}
Z_n := \frac{1}{\sqrt{n \log n}} P_n.
\end{align*}
Note that components of $Z_n$ may be equal or $0$. The aim of the current section is to show the following theorem.

\begin{thm}\label{CW}
As $n \to \infty$,
\begin{align*}
Z_n \stackrel{\mathcal{L}}{\longrightarrow} \mathcal{N}(0, \Sigma_V),
\end{align*}
where $\mathcal{N}(0, \Sigma_V)$ denotes a Gaussian mixture distribution with 
latent distribution $\mathcal{L}(V)$ and covariance matrix
$\Sigma_V$ defined in (\ref{Sigma1}) to (\ref{Sigma4}) and (\ref{Sigma5}).
\end{thm}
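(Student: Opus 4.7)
The plan is to realise each component of $Z_n$ as the normalised sum of a martingale difference sequence driven by the colour draws, show that the predictable quadratic variation of the assembled vector process converges almost surely to the announced mixed covariance $\Sigma_V$, verify a Lindeberg condition, and then conclude via a vector martingale central limit theorem with random limit variance.

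First I would unify the representation of the two kinds of entries of $Z_n$. For a ``small'' index $k \in \{p+1, \ldots, q\}$, Lemma~\ref{erw} gives
\begin{align*}
\pi_k(Y_n) = \gamma_n^{(k)} M_n^{(k)} = \gamma_n^{(k)} \sum_{j=0}^{n-1} d_{j+1}^{(k)},
\end{align*}
where $d_{j+1}^{(k)} := M_{j+1}^{(k)} - M_j^{(k)}$. For a ``big'' index $k \in \{1, \ldots, p\}$, Lemma~\ref{conv} combined with the $L^2$ bound of Lemma~\ref{Speed} gives the complementary tail representation
\begin{align*}
\pi_k(Y_n) - \gamma_n^{(k)} \frac{\Gamma(|X_0|/r + \bar{\lambda}_k/r)}{\Gamma(|X_0|/r)} \Xi_k = -\gamma_n^{(k)} \sum_{j \ge n} d_{j+1}^{(k)},
\end{align*}
valid in $L^2$. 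Separating real and imaginary parts and dividing by $\sqrt{a_n}$ (with $a_n \in \{n, n\log n\}$) then expresses every coordinate of $Z_n$ as a weighted real martingale difference sum driven by the common sequence of drawn colours; the weight of the $(j+1)$-th increment is proportional to $\gamma_n^{(k)}/\gamma_{j+1}^{(k)}$ and is supported on $\{j < n\}$ in the small case and on $\{j \ge n\}$ in the big case.

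Next I would compute the predictable quadratic variation. A short calculation in the spirit of the proof of Lemma~\ref{conv} shows that
\begin{align*}
\E\bigl[\pi_k(X_{j+1}-X_j)\,\overline{\pi_m(X_{j+1}-X_j)} \,\big|\, \mathcal{F}_j\bigr] = \sum_{i=1}^q \frac{X_j^{(i)}}{rj+|X_0|}\pi_k(\Delta_i)\overline{\pi_m(\Delta_i)},
\end{align*}
up to a correction of order $j^{-2}$ arising from centring. Because $X_j/(rj+|X_0|) \to V$ almost surely, the right-hand side converges to the random quantity $\sum_i V^{(i)}\pi_k(\Delta_i)\overline{\pi_m(\Delta_i)}$. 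Assembling this into the bracket of $Z_n$, using the asymptotics $|\gamma_n^{(k)}/\gamma_{j+1}^{(k)}| \sim (n/j)^{\Re(\lambda_k)/r}$, reduces each block of the bracket to Riemann-sum approximations of integrals $\int_0^1 t^{(\bar{\lambda}_k+\lambda_m)/r - 1}\,dt$ for small--small entries, of analogous integrals on $(1,\infty)$ for big--big entries, while big--small cross terms vanish identically because their summation ranges are disjoint. The critical case $\Re(\lambda_k)=r/2$ contributes the logarithmic divergence $\sum_{j=1}^{n} 1/j \sim \log n$, which is exactly what forces the $\sqrt{n\log n}$ scaling in case (ii). The $\langle \cdot, \cdot\rangle_V$-orthogonality imposed on left eigenvectors, together with the block structure of $R$ relative to the communication classes, ensures that the limit organises into the block form prescribed by formulas (\ref{Sigma1})--(\ref{Sigma5}).

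It remains to verify the conditional Lindeberg condition and invoke the CLT. Each difference $\pi_k(X_{j+1}-X_j) \in \{\pi_k(\Delta_1),\ldots,\pi_k(\Delta_q)\}$ takes at most $q$ bounded values, so every entry of the difference array is bounded by $\bo\bigl(|\gamma_n^{(k)}|/(|\gamma_{j+1}^{(k)}|\sqrt{a_n})\bigr)$, which is uniformly $o(1)$ in both regimes and yields the conditional Lindeberg bound without effort. Combining almost-sure convergence of the predictable bracket with this Lindeberg condition, the vector martingale central limit theorem for arrays with random target covariance, in its stable or mixing formulation (e.g.\ Hall--Heyde Theorem~3.2, or the variant already used in \cite{S}), delivers $Z_n \stackrel{\mathcal{L}}{\longrightarrow} \mathcal{N}(0,\Sigma_V)$ as a Gaussian mixture with mixing variables $V^{(1)},\ldots,V^{(q)}$. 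The main obstacle is the bracket computation: one has to verify that the cross-covariances between all projections, both within and across the big, critical and small regimes, and for eigenvalues of multiplicity greater than one, really assemble into the precise block form of $\Sigma_V$ rather than producing spurious off-diagonal contributions, and it is exactly here that the $\langle \cdot, \cdot \rangle_V$-orthogonalisation of the left eigenvectors and the type 1/2/3 class decomposition of $R$ play their decisive role.
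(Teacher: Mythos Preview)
Your approach is essentially the paper's: represent small-eigenvalue components as forward martingale sums and big-eigenvalue components as tail sums, compute the predictable bracket using $X_j/(rj+|X_0|)\to V$, verify Lindeberg via the deterministic boundedness of $\pi_k(\Delta_i)$, and invoke a martingale array CLT with random limiting variance. The paper carries this out via Cram\'er--Wold and the scalar Corollary~3.1 of Hall--Heyde rather than a vector version, but that is cosmetic.

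Two small points. First, the tail sums $\sum_{j\ge n} d_{j+1}^{(k)}$ give an \emph{infinite} array, whereas the cited CLTs require a finite triangular scheme $1\le j\le k_n$; the paper handles this by truncating at $k_n=n^2$ and showing (via Lemma~\ref{Speed}, which you already invoke) that the discarded tail is $o(1)$ in $L^2$. You should make this cut explicit. Second, the $\langle\cdot,\cdot\rangle_V$-orthogonalisation of the left eigenvectors is \emph{not} needed for Theorem~\ref{CW}: the formulas (\ref{Sigma1})--(\ref{Sigma5}) are exactly what the bracket computation produces for any choice of dual bases, and the zero blocks come from $V^{(m)}=0$ on non-dominant colours together with the disjoint summation ranges for big versus small indices. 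The orthogonality only enters later, in the proof of Theorem~\ref{fluc}, to control the diagonal of $A_V$.
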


We now explicitly give the covariance matrices. As in Theorem \ref{fluc}, there are two cases. First assume that
for all $k \in \{1, \ldots, q\}$, $\Re(\lambda_k) \not= r/2$. The
non--zero entries of the $2q \times 2q$ matrix $\Sigma_V$ are given by
\begin{align} \label{Sigma1}
&(\Sigma_V)_{2k-1,2\ell-1} := \\
&
\begin{cases}
r^2 \left(\frac{\Xi_k}{r}
  + \frac{\pi_k(X_0)}{|X_0|}\right) \left(1-\left(\frac{\Xi_k}{r}
  + \frac{\pi_k(X_0)}{|X_0|}\right)\right), \qquad k=\ell, \lambda_{k}=r
\\
-r^2 \left(\frac{\Xi_k}{r}
  + \frac{\pi_k(X_0)}{|X_0|}\right)\left(\frac{\Xi_{\ell}}{r}
  + \frac{\pi_{\ell}(X_0)}{|X_0|}\right), \qquad k\not=\ell, \lambda_{k}=\lambda_{\ell}=r
\\
\sum_{m=1}^q
  V^{(m)}\Re\left(\frac{\left(\frac{\bar{\lambda}_k+\bar{\lambda}_{\ell}}{r}-1\right)\lambda_k\lambda_{\ell}\bar{u}_k^{(m)}\bar{u}_{\ell}^{(m)}}{2\left|1-\frac{\lambda_k+\lambda_{\ell}}{r}\right|^2}
  +
  \frac{\left(\frac{\lambda_k+\bar{\lambda}_{\ell}}{r}-1\right)\bar{\lambda}_k\lambda_{\ell}u_k^{(m)}\bar{u}_{\ell}^{(m)}}{2\left|1-\frac{\lambda_k+\bar{\lambda}_{\ell}}{r}\right|^2}\right),
 a+c < k, \ell \leq p \\
\sum_{m=1}^q
  V^{(m)}\Re\left(\frac{\left(1-\frac{\bar{\lambda}_k+\bar{\lambda}_{\ell}}{r}\right)\lambda_k\lambda_{\ell}\bar{u}_k^{(m)}\bar{u}_{\ell}^{(m)}}{2\left|1-\frac{\lambda_k+\lambda_{\ell}}{r}\right|^2}
  +
  \frac{\left(1-\frac{\lambda_k+\bar{\lambda}_{\ell}}{r}\right)\bar{\lambda}_k\lambda_{\ell}u_k^{(m)}\bar{u}_{\ell}^{(m)}}{2\left|1-\frac{\lambda_k+\bar{\lambda}_{\ell}}{r}\right|^2}\right),
k, \ell >p \nonumber
\end{cases}
\end{align}
and
\begin{align}\label{Sigma2}
&(\Sigma_V)_{2k,2\ell} := \\
&
\begin{cases}
\sum_{m=1}^q
  V^{(m)}\Re\left(-\frac{\left(\frac{\bar{\lambda}_k+\bar{\lambda}_{\ell}}{r}-1\right)\lambda_k\lambda_{\ell}\bar{u}_k^{(m)}\bar{u}_{\ell}^{(m)}}{2\left|1-\frac{\lambda_k+\lambda_{\ell}}{r}\right|^2}
  +
  \frac{\left(\frac{\lambda_k+\bar{\lambda}_{\ell}}{r}-1\right)\bar{\lambda}_k\lambda_{\ell}u_k^{(m)}\bar{u}_{\ell}^{(m)}}{2\left|1-\frac{\lambda_k+\bar{\lambda}_{\ell}}{r}\right|^2}\right),
 a+c < k, \ell \leq p\\
\sum_{m=1}^q
  V^{(m)}\Re\left(-\frac{\left(1-\frac{\bar{\lambda}_k+\bar{\lambda}_{\ell}}{r}\right)\lambda_k\lambda_{\ell}\bar{u}_k^{(m)}\bar{u}_{\ell}^{(m)}}{2\left|1-\frac{\lambda_k+\lambda_{\ell}}{r}\right|^2}
  +
  \frac{\left(1-\frac{\lambda_k+\bar{\lambda}_{\ell}}{r}\right)\bar{\lambda}_k\lambda_{\ell}u_k^{(m)}\bar{u}_{\ell}^{(m)}}{2\left|1-\frac{\lambda_k+\bar{\lambda}_{\ell}}{r}\right|^2}\right),
k, \ell >p \nonumber 
\end{cases}
\end{align}
and
\begin{align}\label{Sigma3}
&(\Sigma_V)_{2k-1,2\ell} := \\
&
\begin{cases}
\sum_{m=1}^q
  V^{(m)}\Im\left(\frac{\left(\frac{\bar{\lambda}_k+\bar{\lambda}_{\ell}}{r}-1\right)\lambda_k\lambda_{\ell}\bar{u}_k^{(m)}\bar{u}_{\ell}^{(m)}}{2\left|1-\frac{\lambda_k+\lambda_{\ell}}{r}\right|^2}
  +
  \frac{\left(\frac{\lambda_k+\bar{\lambda}_{\ell}}{r}-1\right)\bar{\lambda}_k\lambda_{\ell}u_k^{(m)}\bar{u}_{\ell}^{(m)}}{2\left|1-\frac{\lambda_k+\bar{\lambda}_{\ell}}{r}\right|^2}\right),
 a+c < k, \ell \leq p\\
\sum_{m=1}^q
  V^{(m)}\Im\left(\frac{\left(1-\frac{\bar{\lambda}_k+\bar{\lambda}_{\ell}}{r}\right)\lambda_k\lambda_{\ell}\bar{u}_k^{(m)}\bar{u}_{\ell}^{(m)}}{2\left|1-\frac{\lambda_k+\lambda_{\ell}}{r}\right|^2}
  +
  \frac{\left(1-\frac{\lambda_k+\bar{\lambda}_{\ell}}{r}\right)\bar{\lambda}_k\lambda_{\ell}u_k^{(m)}\bar{u}_{\ell}^{(m)}}{2\left|1-\frac{\lambda_k+\bar{\lambda}_{\ell}}{r}\right|^2}\right),
k, \ell >p \nonumber 
\end{cases}
\end{align}
as well as 
\begin{align} \label{Sigma4}
(\Sigma_V)_{2k,2\ell-1}:=(\Sigma_V)_{2\ell-1,2k}.
\end{align}

\vspace{0.2 cm}
\noindent In the case where there is a dominant $k$ such that
$\Re(\lambda_k)/r=1/2$, the matrix $\Sigma_V$ has a lot more zero
entries due to the scaling. Its non zero entries are in places $(2k-1,
2k-1)$ and $(2k,2k)$ for $k$ such that $\Re(\lambda_k)/r=1/2$. For
these $k$,
\begin{align} \label{Sigma5}
(\Sigma_v)_{2k-1,2k-1} = (\Sigma_v)_{2k,2k} = \frac{|\lambda_k|^2}{2}\sum_{m=1}^qV^{(m)}|u_k^{(m)}|^2.
\end{align}

\vspace{0.2 cm}
{\bf Comments on the covariance structure $\Sigma_V$.} Let us first
consider the case where there is no eigenvalue with real part
$r/2$. In this case, we make the following comments on the covariance structure.
\begin{enumerate}[itemsep=0pt]
\item In the limit,
$Z_{n}^{(k)}$ and $Z_{n}^{(\ell)}$ are independent for $k \in \{1, \ldots,
2p\}$, $\ell \in \{2p+1, \ldots, 2q\}$.
\item Components
$Z_n^{(1)}, \ldots, Z_n^{(2(a+c)-1)}$ are asymptotically independent of all other
components.
\item According to our calculations,
for eigenvalues $\lambda_k$ corresponding to type $3$ classes, the
fluctuations of their projections vanish in the $\sqrt{n}$
scaling. This might be due to the fact that there are too little draws
from these classes compared to the other classes. So Theorem \ref{CW}
says nothing about the fluctuations within these classes (or, at
least, nothing particularly interesting), as the draws from the
dominant colours dominate
in the limit and there is too little fluctuation among the remaining colours.
\end{enumerate}
If there are eigenvalues with real part $r/2$:
\begin{enumerate}
\item The fluctuations
of the other projections are still of order $\sqrt{n}$ and they tend
to zero in the $\sqrt{n \log n}$ scaling.
\item Real and imaginary parts of
the $\Re(\lambda_k)=r/2$-components are independent, and they are also
independent of each other.
\end{enumerate}

\begin{rem}
The covariance matrix $\Sigma_V$ for the cyclic urn model has been calculated explicitly, see \cite{MN}.  
\end{rem}

The final ingredient for the proof of Theorem \ref{CW} is Corollary $3.1$ from
 \cite{HH}.

\begin{proposition}\label{inc}
Let $\{S_{n,j}, \mathcal{F}_{n,j}, 1\leq j \leq k_n, n \geq 1\}$ be a zero--mean,
square--integrable martingale array with increments $I_{n,j}$ and let
$\eta^2$ be an a.s. finite random variable. Suppose that for all
$\varepsilon >0$,
\begin{align}\label{bed1}
 \sum_{j=1}^{k_n}\E[I_{n,j}^2I(|I_{n,j}|>\varepsilon)|\mathcal{F}_{n,j-1}]\stackrel{\Prob}{\to} 0,
\end{align}
and
\begin{align}\label{bed2}
\sum_{j=1}^{k_n}\E[I_{n,j}^2|\mathcal{F}_{n,j-1}]\stackrel{\Prob}{\to}\eta^2,
\end{align}
and $\mathcal{F}_{n,j}\subseteq \mathcal{F}_{n+1,j}$ for $1 \leq j \leq
k_n$, $n \geq 1$. Then
\begin{align*}
S_{n,k_n} = \sum_{j=1}^{k_n}I_{n,j} \stackrel{\mathcal{L}}{\longrightarrow} \eta' N,
\end{align*}
where $\mathcal{L}(\eta')= \mathcal{L}(\eta)$, $\mathcal{L}(N) = \mathcal{N}(0,1)$ and $\eta', N$ are independent.
\end{proposition}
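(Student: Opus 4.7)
The plan is to establish convergence of characteristic functions and invoke L\'evy's continuity theorem: for every $t\in\Rset$, I want to show that $\E[\exp(itS_{n,k_n})]\to\E[\exp(-\tfrac{1}{2}\eta^2 t^2)]$. Since $\eta^2$ is random, this mixed form is precisely the characteristic function of the claimed mixture Gaussian limit.

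First I would pass to bounded increments by truncation. Given $\delta>0$, the Lindeberg-type condition (\ref{bed1}), combined with the orthogonality of martingale increments, lets me replace each $I_{n,j}$ by its centered truncation
\[
\hat I_{n,j}:=I_{n,j}\mathbf{1}_{\{|I_{n,j}|\le\delta\}}-\E\bigl[I_{n,j}\mathbf{1}_{\{|I_{n,j}|\le\delta\}}\mid\mathcal{F}_{n,j-1}\bigr]
\]
at the cost of an error that vanishes in probability once $n\to\infty$ and $\delta\downarrow 0$ in that order. With $|\hat I_{n,j}|\le 2\delta$, set $\phi_{n,j}(t):=\E[e^{it\hat I_{n,j}}\mid\mathcal{F}_{n,j-1}]$ and $\sigma_{n,j}^2:=\E[\hat I_{n,j}^2\mid\mathcal{F}_{n,j-1}]$. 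A second-order Taylor expansion yields $\phi_{n,j}(t)=1-\tfrac{t^2}{2}\sigma_{n,j}^2+r_{n,j}(t)$ with $\sum_j|r_{n,j}(t)|$ of order $|t|^3\delta\sum_j\sigma_{n,j}^2$, which by (\ref{bed2}) is $o_{\Prob}(1)$ for $\delta$ small. Combined with (\ref{bed2}) itself, one obtains $\prod_{j=1}^{k_n}\phi_{n,j}(t)\to\exp(-\tfrac{t^2}{2}\eta^2)$ in probability.

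The heart of the argument is the martingale factorisation. Introduce the complex-valued process
\begin{align*}
L_{n,j}:=\frac{\exp\bigl(it\textstyle\sum_{i\le j}\hat I_{n,i}\bigr)}{\prod_{i\le j}\phi_{n,i}(t)},
\end{align*}
which is a mean-one martingale by iterated conditioning. The identity $\E\bigl[\exp(it\sum_j\hat I_{n,j})\bigr]=\E\bigl[L_{n,k_n}\prod_j\phi_{n,j}(t)\bigr]$ then reduces the theorem to exchanging limit and expectation on the right-hand side, which would produce the desired value $\E[\exp(-\tfrac{1}{2}\eta^2 t^2)]$.

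The hard part will be controlling $|L_{n,k_n}|$, since the denominator $|\phi_{n,j}(t)|$ can come arbitrarily close to $0$ and is not uniformly bounded below. The standard remedy is a second truncation at the stopping time $\tau_n:=\inf\bigl\{j:\sum_{i\le j}\sigma_{n,i}^2>K\bigr\}$: on $\{j\le\tau_n\}$ the Taylor expansion forces $\prod_{i\le j}|\phi_{n,i}(t)|$ to stay above a positive constant depending only on $K$, $t$ and $\delta$, so dominated convergence applies to the stopped process $L_{n,k_n\wedge\tau_n}$ and yields the right limit with $\eta^2$ replaced by $\eta^2\wedge K$. The residual event $\{\tau_n<k_n\}$ has probability at most $\Prob\bigl(\sum_j\sigma_{n,j}^2>K\bigr)\to\Prob(\eta^2\ge K)$ by (\ref{bed2}), which vanishes as $K\to\infty$. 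The nesting condition $\mathcal{F}_{n,j}\subseteq\mathcal{F}_{n+1,j}$ is what makes the identification of a single $\eta^2$ meaningful across $n$. Sending $n\to\infty$ first, then $K\to\infty$, and finally $\delta\downarrow 0$ concludes the argument.
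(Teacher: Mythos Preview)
The paper does not prove this proposition; it is quoted as Corollary~3.1 from Hall and Heyde~\cite{HH} and used as a black box in the proof of Theorem~\ref{CW}. Your sketch is precisely the classical characteristic-function argument from that reference, so there is no independent proof in the paper to compare against.

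The outline is sound, but the step ``dominated convergence applies to the stopped process $L_{n,k_n\wedge\tau_n}$ and yields the right limit'' hides a genuine difficulty. Writing $L_n:=L_{n,k_n\wedge\tau_n}$ and $P_n:=\prod_{j\le k_n\wedge\tau_n}\phi_{n,j}(t)$, the stopping gives $|L_n|\le C(K,t)$ and $P_n\to P:=e^{-t^2(\eta^2\wedge K)/2}$ in probability, and one decomposes
\[
\E[L_nP_n]-\E[P]=\E\bigl[L_n(P_n-P)\bigr]+\E\bigl[(L_n-1)P\bigr].
\]
Bounded convergence disposes of the first summand, but not the second: $L_n$ has mean one yet need not converge, and $P$ is a fixed nontrivial random variable, so there is nothing to which dominated convergence could be applied. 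It is exactly here that the nesting hypothesis $\mathcal{F}_{n,j}\subseteq\mathcal{F}_{n+1,j}$ does real work. One approximates $P$ in $L^1$ by a bounded $\mathcal{F}_{m,j_0}$-measurable $Y$; nesting places $Y$ in $\mathcal{F}_{n,j_0}$ for all $n\ge m$, whence the martingale property yields $\E[(L_n-1)Y]=\E[(L_{n,j_0\wedge\tau_n}-1)Y]$, and the right-hand side is $O(j_0\delta)$ uniformly in $n$ because only $j_0$ increments of size at most $2\delta$ are involved. Your closing remark that nesting ``makes the identification of a single $\eta^2$ meaningful across $n$'' gestures at this but does not supply the mechanism; as written, the argument has a real (though standard and repairable) gap at exactly the place where that hypothesis is needed.
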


\begin{proof} [Proof of Theorem \ref{CW}]
Let $\alpha_1, \ldots, \alpha_{2q} \in \Rset$. We use Proposition \ref{inc} to show weak convergence of the random variables
\begin{align*}
\sum_{k=1}^{2q}\alpha_{k}Z_{n}^{(k)},
\end{align*}
from which the claim follows by an application of the Cram\'{e}r--Wold device. 

First, assume that for all eigenvalues $\lambda_k$ belonging to dominant classes, $\Re(\lambda_k)
  \not= r/2$, which is case $1$ in Theorem \ref{fluc}. In this case,
  $Z_n=P_n/\sqrt{n}$. We rewrite the given linear combination $\alpha_1Z_{n}^{(1)} + \cdots + \alpha_{2q} Z_{n}^{(2q)}$ as a sum of
martingale differences by a simultaneous consideration of the real and imaginary
parts of each eigenspace coefficient: For $1 \leq k \leq
p$, write 
\begin{align*}
&\alpha_{2k-1}Z_{n}^{(2k-1)}+ \alpha_{2k}Z_{n}^{(2k)} \\
&=
                                                        \frac{1}{\sqrt{n}}\sum_{j=n}^{\infty}(
                                                        \alpha_{2k-1}\Re(\gamma_n^{(k)}(M^{(k)}_{j}
                                                        -
                                                        M^{(k)}_{j+1}))
                                                        + \alpha_{2k}\Im(\gamma_n^{(k)}(M^{(k)}_{j}
                                                        -
                                                        M^{(k)}_{j+1})))\\
&=
  \frac{1}{\sqrt{n}}(\alpha_{2k-1}\Re(\gamma_n^{(k)})+\alpha_{2k}\Im(\gamma_n^{(k)}))
  \sum_{j=n}^{\infty}\Re(M^{(k)}_{j}
                                                        -
                                                        M^{(k)}_{j+1})\\
&+ \frac{1}{\sqrt{n}}(\alpha_{2k}\Re(\gamma_n^{(k)})-\alpha_{2k-1}\Im(\gamma_n^{(k)}) \sum_{j=n}^{\infty}\Im(M^{(k)}_{j}
                                                        -
                                                        M^{(k)}_{j+1})\\
&=: \beta_{2k-1}(n) \sum_{j=n}^{\infty} \Re(M^{(k)}_{j}
                                                        -
                                                        M^{(k)}_{j+1})+ \beta_{2k}(n) \sum_{j=n}^{\infty} \Im(M^{(k)}_{j}
                                                        -
                                                        M^{(k)}_{j+1}).
\end{align*}
In the case $p+1
\leq k \leq q$, we set $g:= \max\left\{-\frac{\lambda_k + |X_0|}{r}+1 :
1 \leq k \leq q, \lambda_k + |X_0| \in r \mathbb{Z}_{-}\right\}$. We
can only rewrite $\alpha_{2k-1}Z_{n}^{(2k-1)}+
\alpha_{2k}Z_{n}^{(2k)}$ in the same manner as above for all $k \in
\{p+1, \ldots, q\}$ if $n >g$, as the factors $\gamma_n^{(k)}$ might
be zero for smaller $n$. So for $n >g$, 
\begin{align*}
&\alpha_{2k-1}Z_{n}^{(2k-1)}+ \alpha_{2k}Z_{n}^{(2k)} \\
&= \frac{1}{\sqrt{n}}(\alpha_{2k-1}\Re(\gamma_n^{(k)})+\alpha_{2k}\Im(\gamma_n^{(k)})) \sum_{j=g}^{n-1}\Re(M^{(k)}_{j+1}
                                                        -
                                                        M^{(k)}_{j})\\
&+ \frac{1}{\sqrt{n}}(\alpha_{2k}\Re(\gamma_n^{(k)})-\alpha_{2k-1}\Im(\gamma_n^{(k)})) \sum_{j=g}^{n-1}\Im(M^{(k)}_{j+1}
                                                        -
                                                        M^{(k)}_{j})\\
& +
  \frac{1}{\sqrt{n}}(\alpha_{2k-1}\Re(\gamma_n^{(k)})+\alpha_{2k}\Im(\gamma_n^{(k)}))
  \Re (M^{(k)}_{g})+
  \frac{1}{\sqrt{n}}(\alpha_{2k}\Re(\gamma_n^{(k)})-\alpha_{2k-1}\Im(\gamma_n^{(k)}))
  \Im(M^{(k)}_{g})\\
&=: \beta_{2k-1}(n) \sum_{j=g}^{n-1}\Re(M^{(k)}_{j+1}
                                                        -
                                                        M^{(k)}_{j})+ \beta_{2k}(n) \sum_{j=g}^{n-1}\Im(M^{(k)}_{j+1}
                                                        -
                                                        M^{(k)}_{j})
  +r_k(n).
\end{align*}
With $r(n):=\sum_{k=p+1}^q r_k(n)$, for $n >g$,
\begin{align*}
\sum_{k=1}^{2q}\alpha_{k}Z_{n}^{(k)}
& = \sum_{k=1}^{p}( \beta_{2k-1}(n) \sum_{j=n}^{\infty}\Re(M^{(k)}_{j}
                                                        -
                                                        M^{(k)}_{j+1})+ \beta_{2k}(n) \sum_{j=n}^{\infty}\Im(M^{(k)}_{j}
                                                        -
                                                        M^{(k)}_{j+1}))\\
 &+ \sum_{k=p+1}^{q}( \beta_{2k-1}(n) \sum_{j=g}^{n-1}\Re(M^{(k)}_{j+1}
                                                        -
                                                        M^{(k)}_{j})+ \beta_{2k}(n) \sum_{j=g}^{n-1}\Im(M^{(k)}_{j+1}
                                                        -
                                                        M^{(k)}_{j}))
   + r(n).
\end{align*}
Next, we replace the series by a finite sum by choosing a sequence
$(k(n))_{n \geq 0} \uparrow \infty$ such that
\begin{align*}
\sum_{k=1}^{2q}\alpha_{k}Z_{n}^{(k)}
& = \sum_{k=1}^{p}( \beta_{2k-1}(n) \sum_{j=n}^{k(n)}\Re(M^{(k)}_{j}
                                                        -
                                                        M^{(k)}_{j+1})+ \beta_{2k}(n) \sum_{j=n}^{k(n)}\Im(M^{(k)}_{j}
                                                        -
                                                        M^{(k)}_{j+1}))\\
 &+ \sum_{k=p+1}^{q}( \beta_{2k-1}(n) \sum_{j=g}^{n-1}\Re(M^{(k)}_{j+1}
                                                        -
                                                        M^{(k)}_{j})+ \beta_{2k}(n) \sum_{j=g}^{n-1}\Im(M^{(k)}_{j+1}
                                                        -
                                                        M^{(k)}_{j}))
  + \varepsilon(n),
\end{align*}
where
$\varepsilon(n) \to 0$ in $L^2$. The following lemma shows that
$(k(n))_{n \ge 0} = (n^2)_{n \ge 0}$ is sufficient.

\begin{lem}\label{Lemma_rest}
Let 
\begin{align*}
\varepsilon(n):= \sum_{k=1}^{p}( \beta_{2k-1}(n) \sum_{j=n^2+1}^{\infty}\Re(M^{(k)}_{j}
                                                        -
                                                        M^{(k)}_{j+1})+ \beta_{2k}(n) \sum_{j=n^2+1}^{\infty}\Im(M^{(k)}_{j}
                                                        -
                                                        M^{(k)}_{j+1}))
  + r(n).
\end{align*}
 Then
\begin{flalign*}
&&\varepsilon(n) \stackrel{L^2}{\longrightarrow} 0, && n \to \infty.
\end{flalign*}
\end{lem}

\begin{proof}[Proof of Lemma \ref{Lemma_rest}]
It is easy to see that $r(n)$ tends to zero in $L^2$ as
$\Re(\lambda_k)<r/2$ for all summands in this term.
The remaining part follows immediately from Lemma \ref{Speed}.
\end{proof}

For each $n >g$, we have
decomposed the sum $\alpha_1Z_{n}^{(1)} + \cdots + \alpha_{2q}
Z_{n}^{(2q)}$ into a sum of weighted martingale differences with
respect to the filtration $(\mathcal{F}_{n,i})_{1 \leq i \leq n^2}$
given by
$\mathcal{F}_{n,i}:=\sigma\left(X_0, \ldots, X_{i+1} \right)$. This
yields the zero--mean, square--integrable martingale array $\{S_{n,i},
\mathcal{F}_{n,i}, g \leq i \leq n^2, n \geq g
\}$, where
\begin{align*}
 S_{n,i}&:= \sum_{k=1}^{p}( \beta_{2k-1}(n) \sum_{j=n}^{i}\Re(M^{(k)}_{j}
                                                        -
                                                        M^{(k)}_{j+1})+ \beta_{2k}(n) \sum_{j=n}^{i}\Im(M^{(k)}_{j}
                                                        -
                                                        M^{(k)}_{j+1}))\\
 &+ \sum_{k=p+1}^{q}( \beta_{2k-1}(n) \sum_{j=g}^{\min\{i,n-1\}}\Re(M^{(k)}_{j+1}
                                                        -
                                                        M^{(k)}_{j})+ \beta_{2k}(n) \sum_{j=g}^{\min\{i,n-1\}}\Im(M^{(k)}_{j+1}
                                                        -
                                                        M^{(k)}_{j})).
\end{align*}
Weak
convergence of this array implies weak convergence of $\alpha_1Z_{n}^{(1)} + \cdots + \alpha_{2q}
Z_{n}^{(2q)}$, and it remains to
check the conditions of Proposition \ref{inc}.  

As
$\mathcal{F}_{n,i}$ is independent of $n$, the
filtration satisfies the condition in Theorem
\ref{inc}.
Depending on the summation index $j$, the increments $I_{n,j}$ range
over different projections. We use the shorthand
\begin{align*}
I_{n,j}:=
\begin{cases}
\sum_{k=p+1}^{q}( \beta_{2k-1}(n) \Re(M^{(k)}_{j+1}
                                                        -
                                                        M^{(k)}_{j})+ \beta_{2k}(n) \Im(M^{(k)}_{j+1}
                                                        -
                                                        M^{(k)}_{j})),
                                                      & \hspace{0.3
                                                        cm} j < n,\\
\sum_{k=1}^{p}( \beta_{2k-1}(n) \Re(M^{(k)}_{j}
                                                        -
                                                        M^{(k)}_{j+1})+ \beta_{2k}(n) \Im(M^{(k)}_{j}
                                                        -
                                                        M^{(k)}_{j+1})),
                                                     & \hspace{0.3 cm}
                                                      j \geq n.
\end{cases}
\end{align*} 
The absolute value of these increments is deterministically bounded:
For $j < n$,
\begin{align*}
|I_{n,j}|& \leq \sum_{k=p+1}^{q}| \beta_{2k-1}(n)| |\Re(M^{(k)}_{j+1}
                                                        -
                                                        M^{(k)}_{j})|+ |\beta_{2k}(n)| |\Im(M^{(k)}_{j+1}
                                                        -
                                                        M^{(k)}_{j})|\\
& \leq C_1 \sum_{k= p+1}^q n^{\Re(\lambda_{k})/r-1/2 }(|\Re(M^{(k)}_{j+1}
                                                        -
                                                        M^{(k)}_{j})|+|\Im(M^{(k)}_{j+1}
                                                        -
                                                        M^{(k)}_{j})|)\\
& \leq\sqrt{2} C_1 \sum_{k=p+ 1}^q n^{\Re(\lambda_{k})/r-1/2 }|M^{(k)}_{j+1}
                                                        -
                                                        M^{(k)}_{j}|
 \leq C_2  n^{-1/2}\sum_{k=p+1}^q
  \left(\frac{n}{j}\right)^{\Re(\lambda_{k})/r}\\
& = \bo\left(n^{\max\{\Re(\lambda_{p+1})/r,0\}-1/2} \right)
\end{align*}
as $n \to \infty$, where $C_1$ and $C_2$ are
positive constants. Analoguously, for $n \leq j \leq n^2$, 
\begin{align*}
|I_{n,j}|
& \leq C  n^{-1/2}\sum_{k=1}^p
  \left(\frac{n}{j}\right)^{\Re(\lambda_{k})/r} = \bo\left(n^{-1/2}\right)
\end{align*}
as $n \to \infty$, where $C>0$ is a
constant.

By the above, for each $\varepsilon >0$, there exists $N \in
\mathbb{N}$ such that $|I_{N,j}|<\varepsilon$ for all $j=g,
\ldots, N^2$, and in particular
\begin{align*}
\sum_{j=g}^{n^2}\E[I_{n,j}^2 \mathrm{1}(|I_{n,j}|>
\varepsilon)|\mathcal{F}_{n,j-1}] = 0
\end{align*}
for all $n \geq N$. Thus, the given martingale array satisfies condition (\ref{bed1}). 

We now turn to condition (\ref{bed2}). For $z \in \mathbb{C}^q$, let
$\Re(z), \Im(z)$ denote the vectors whose components are given by the
real (respectively imaginary) parts of the components of $z$. We
rewrite the increments $I_{n,j}$ as 
\begin{align*}
I_{n,j}^2 = \left(\xi_{n,j} (X_{j+1} - X_{j}) - \eta_{n,j} \frac{X_{j}}{rj+|X_0|}\right)^2
\end{align*}
with
\begin{align*}
\xi_{n,j}:=\sum_{k \in K}&
\frac{1}{\sqrt{n}}\left[\left(\alpha_{2k-1}\Re\left(\gamma_n^{(k)}/\gamma_{j+1}^{(k)}\right)+\alpha_{2k}\Im\left(\gamma_n^{(k)}
                           / \gamma_{j+1}^{(k)}\right)\right)\Re(u_k^\ast)
                           \right. \\
& +
  \left.  \left(\alpha_{2k}\Re\left(\gamma_n^{(k)}/\gamma_{j+1}^{(k)}\right)
  -
  \alpha_{2k-1}\Im\left(\gamma_n^{(k)}/ \gamma_{j+1}^{(k)}\right)\right)\Im(u_k^\ast)\right] 
\end{align*}
and
\begin{align*}
\eta_{n,j}:= \sum_{k \in K}&
\frac{1}{\sqrt{n}}\left[\left(\alpha_{2k-1}\Re\left(\gamma_n^{(k)}/\gamma_{j+1}^{(k)}\right)+\alpha_{2k}\Im\left(\gamma_n^{(k)}/\gamma_{j+1}^{(k)}\right)\right)\Re(\lambda_k u_k^\ast)
                           \right. \\
& +
  \left.  \left(\alpha_{2k}\Re\left(\gamma_n^{(k)}/\gamma_{j+1}^{(k)}\right)
  -
  \alpha_{2k-1}\Im\left(\gamma_n^{(k)}/\gamma_{j+1}^{(k)}\right)\right)\Im(\lambda_k u_k^\ast)\right], 
\end{align*}
where $K=\{1, \ldots, p\}$ or $K=\{p+1,
\ldots, q\}$, depending on $j$. With this,
\begin{align} \label{Wachstum}
\sum_{j=g}^{n^2}\E[I_{n,j}^2 |\mathcal{F}_{n,j-1}]= 
\sum_{j=g}^{n^2}\sum_{m=1}^q \frac{X_{j}^{(m)}}{rj+|X_0|}
                                           \left(\xi_{n,j} \Delta_{m}
                                        - \eta_{n,j}
                                           \frac{X_{j}}{rj+|X_0|}\right)^2.
\end{align}
This sum over the conditional squared increments converges
almost surely: Recall that each of the $\xi_{n,j}$ and
$\eta_{n,j}$ itself is a sum over different eigenspace components.
If $j \le n-1$, the inner sum ranges over small eigenspaces 
$k$ and $\ell$ with $k, \ell \geq p+1$ 
and therefore, almost surely,
\begin{align*}
&\sum_{j=g}^{n-1}\E[I_{n,j}^2
  |\mathcal{F}_{n,j-1}] \sim \frac{1}{n}\sum_{j=g}^{n-1}\sum_{m=1}^q
  \sum_{k, \ell=p+1}^q
  \frac{X_{j}^{(m)}}{rj+|X_0|}\left(\frac{n}{j}\right)^{\frac{\Re(\lambda_k+\lambda_{\ell})}{r}}
  \\
&\cdot \left(
  \left(\alpha_{2k-1}\Re\left(\lambda_k(u_{k}^\ast)^{(m)} -
  \frac{\lambda_k u_k^\ast X_{j}}{rj+|X_0|}\right)
   + \alpha_{2k}\Im\left(\lambda_k(u_{k}^\ast)^{(m)} - \frac{\lambda_k
  u_k^\ast X_{j}}{rj+|X_0|}\right)\right)\cos\left(
  \frac{\Im(\lambda_k)}{r}\log \left(\frac{n}{j}\right)\right)\right. \\
& +\hspace{-0.1 cm}  \left. \left(\alpha_{2k}\Re\left(\lambda_k(u_{k}^\ast)^{(m)} -
  \frac{\lambda_ku_k^\ast X_{j}}{rj+|X_0|}\right)-
  \alpha_{2k-1}\Im\left(\lambda_k(u_{k}^\ast)^{(m)} - \frac{\lambda_k
  u_k^\ast X_{j}}{rj+|X_0|}\right) \right)\sin\left(
  \frac{\Im(\lambda_k)}{r}\log\left(\frac{n}{j}\right)\right)\right)\\
&\cdot \left(
  \left(\alpha_{2\ell-1}\Re\left(\lambda_{\ell}(u_{\ell}^\ast)^{(m)} -
  \frac{\lambda_\ell u_{\ell}^\ast  X_{j}}{rj+|X_0|}\right)
   + \alpha_{2\ell}\Im\left(\lambda_{\ell}(u_{\ell}^\ast)^{(m)} -
  \frac{\lambda_\ell u_{\ell}^\ast X_{j}}{rj+|X_0|}\right)\right)\cos\left(
  \frac{\Im(\lambda_{\ell})}{r}\log\left(\frac{n}{j}\right)\right)\right. \\
& +\hspace{-0.1 cm}
  \left. \left(\alpha_{2\ell}\Re\left(\lambda_{\ell}(u_{\ell}^\ast)^{(m)}
  - \frac{\lambda_\ell u_{\ell}^\ast X_{j}}{rj+|X_0|}\right)-
  \alpha_{2\ell-1}\Im\left(\lambda_{\ell}(u_{\ell}^\ast)^{(m)} -
  \frac{\lambda_\ell u_{\ell}^\ast X_{j}}{rj+|X_0|}\right) \right)\sin\left(
  \frac{\Im(\lambda_{\ell})}{r}\log\left(\frac{n}{j}\right)\right)\right).
\end{align*}
Now, by Theorem \ref{Assi} and its extension, $X_{j}/(rj+|X_0|)$ converges to $V$ almost surely. This implies that for $\lambda_k \not= r$, $u_k^\ast(X_{j}/(rj+|X_0|)) \to 0$ almost surely and therefore, as $n \to \infty$,
\begin{align*}
&\sum_{j=g}^{n-1}\E[I_{n,j}^2
  |\mathcal{F}_{n,j-1}] \sim \frac{1}{2n}\sum_{j=g}^{n-1}\sum_{m=1}^q\sum_{k, \ell = p+1}^q
  V^{(m)}\left(\frac{n}{j}\right)^{\frac{\Re(\lambda_k+\lambda_{\ell})}{r}}\\
&
 \cdot \left(
  ((\alpha_{2k-1}\alpha_{2\ell-1}-\alpha_{2k}\alpha_{2\ell})\Re(\lambda_k\lambda_{\ell}\bar{u}_k^{(m)}\bar{u}_{\ell}^{(m)})+(\alpha_{2k-1}\alpha_{2\ell}+\alpha_{2k}\alpha_{2\ell-1})\Im(\lambda_k\lambda_{\ell}\bar{u}_k^{(m)}\bar{u}_{\ell}^{(m)}))\right.\\
& \left. \cdot \cos(
  \Im((\lambda_k+\lambda_{\ell})/r) \log(n/j))
  +
  ((\alpha_{2k-1}\alpha_{2\ell-1}+\alpha_{2k}\alpha_{2\ell})\Re(\bar{\lambda}_k\lambda_{\ell}u_k^{(m)}\bar{u}_{\ell}^{(m)}) \right. \\
& \left.+(\alpha_{2k-1}\alpha_{2\ell}-\alpha_{2k}\alpha_{2\ell-1})\Im(\bar{\lambda}_k\lambda_{\ell}u_k^{(m)}\bar{u}_{\ell}^{(m)})) \cos(
  \Im((\lambda_k-\lambda_{\ell})/r)\log(n/j))
  \right. \\
& \left. +
  ((\alpha_{2k-1}\alpha_{2\ell}+\alpha_{2k}\alpha_{2\ell-1})\Re(\lambda_k\lambda_{\ell}\bar{u}_k^{(m)}\bar{u}_{\ell}^{(m)})+(\alpha_{2k}\alpha_{2\ell}-\alpha_{2k-1}\alpha_{2\ell-1})\Im(\lambda_k\lambda_{\ell}\bar{u}_k^{(m)}\bar{u}_{\ell}^{(m)}))\right.\\
& \left. \cdot \sin(
  \Im((\lambda_k+\lambda_{\ell})/r)\log(n/j))
  +
  ((\alpha_{2k}\alpha_{2\ell-1}-\alpha_{2k-1}\alpha_{2\ell})\Re(\bar{\lambda}_k\lambda_{\ell}u_k^{(m)}\bar{u}_{\ell}^{(m)}) \right. \\
& \left.+(\alpha_{2k-1}\alpha_{2\ell-1}+\alpha_{2k}\alpha_{2\ell})\Im(\bar{\lambda}_k\lambda_{\ell}u_k^{(m)}\bar{u}_{\ell}^{(m)})) \sin(
 \Im((\lambda_k-\lambda_{\ell})/r)\log(n/j))
  \right) \\
& \longrightarrow \sum_{k, \ell = p+1}^q (\alpha_{2k-1}\alpha_{2\ell-1} (\Sigma_V)_{2k-1,2\ell-1} +
  \alpha_{2k}\alpha_{2\ell} (\Sigma_V)_{2k,2\ell} +\alpha_{2k-1}\alpha_{2\ell} (\Sigma_V)_{2k-1,2\ell}\\
  & \qquad \qquad \qquad + \alpha_{2k}\alpha_{2\ell-1} (\Sigma_V)_{2k,2\ell-1}).
\end{align*}
For the remaining increments $\E[I_{n,j}^2
  |\mathcal{F}_{n,j-1}]$, $j=n, \ldots, n^2$, where the inner sum ranges over large eigenspaces, we have to distinguish three cases. A calculation analoguous to the previous one shows the claimed convergence of summands $k, \ell$
with $1/2 < \Re(\lambda_k)/r, \Re(\lambda_{\ell})/r <1$. 
Next, if $1/2 < \Re(\lambda_k)/r <1$ and
$\lambda_{\ell}=r$, due to our choice of bases $u_1, \ldots, u_q$ and $v_1, \ldots, v_q$, the corresponding summand converges to
\begin{align*}
&r \alpha_{2\ell-1} \sum_{m=1}^q
  V^{(m)}\left(u_{\ell}^{(m)}-\frac{\Xi_\ell}{r}-\pi_\ell(X_0)/|X_0|\right)\left(\alpha_{2k-1}\Re\left(\frac{\lambda_k \bar{u}_k^{(m)}}{2+\lambda_k/r} \right) + \alpha_{2k}\Im\left(\frac{\lambda_k \bar{u}_k^{(m)}}{2+\lambda_k/r}\right)\right)=0. 
\end{align*}
Finally, for summands $k, \ell$ with $\lambda_k = \lambda_{\ell} = r$, the corresponding summand tends to
\begin{align*}
& r^2 \alpha_{2k-1}\alpha_{2\ell-1} \sum_{m=1}^qV^{(m)}\left(u_k^{(m)} - \left(\frac{\Xi_k}{r}
  + \frac{\pi_k(X_0)}{|X_0|}\right)\right)\left(u_{\ell}^{(m)} - \left(\frac{\Xi_{\ell}}{r}
  + \frac{\pi_{\ell}(X_0)}{|X_0|}\right)\right)\\
= &\begin{cases}
-r^2 \alpha_{2k-1}\alpha_{2\ell-1} \left(\frac{\Xi_k}{r}
  + \frac{\pi_k(X_0)}{|X_0|}\right)\left(\frac{\Xi_{\ell}}{r}
  + \frac{\pi_{\ell}(X_0)}{|X_0|}\right), \qquad k \not= \ell \\
r^2 \alpha_{2k-1}^2\left(\frac{\Xi_k}{r}
  + \frac{\pi_k(X_0)}{|X_0|}\right) \left(1-\left(\frac{\Xi_k}{r}
  + \frac{\pi_k(X_0)}{|X_0|}\right)\right), \qquad k = \ell
\end{cases}\\
= & \quad \alpha_{2k-1}\alpha_{2\ell-1} (\Sigma_V)_{2k-1,2\ell-1}.
\end{align*}
In total, this implies that
\begin{align*}
&\sum_{j=g}^{n^2}\E[I_{n,j}^2 |\mathcal{F}_{n,j-1}]
  \stackrel{a.s.}{\longrightarrow} \sum_{i,j=1}^{2q}
  \alpha_{i}\alpha_{j} (\Sigma_V)_{i,j} = (\alpha_1, \ldots, \alpha_{2q}) \Sigma_V (\alpha_1, \ldots, \alpha_{2q})^t.
\end{align*}

Thus by Proposition \ref{inc}, 
\begin{align*}
\alpha_1Z_n^{(1)} + \cdots +
\alpha_{2q}Z_n^{(2q)} \stackrel{\mathcal{L}}{\longrightarrow} (\alpha_1, \ldots, \alpha_{2q})\mathcal{N}(0, \Sigma_V),
\end{align*}
and the Cram\'{e}r--Wold
device implies weak convergence of $(Z_n)_{n \geq 1}$ to a Gaussian
distribution with covariance matrix $\Sigma_V$ given $V$. 

In the case where there is at least one dominant $k$ such that
$\Re(\lambda_k)/r=1/2$, proceeding along the same lines as for the
first case, one can again show that 
\begin{align*}
\alpha_1Z_n^{(1)} + \cdots +
\alpha_{2q}Z_n^{(2q)} \stackrel{\mathcal{L}}{\longrightarrow} (\alpha_1, \ldots, \alpha_{2q})\mathcal{N}(0, \Sigma_V),
\end{align*}
where $\Sigma_V$ is defined in equation (\ref{Sigma5}). The only
difference is that in this case, due to
the scaling, the matrix $\Sigma_V$ has a lot more zero
entries.
\end{proof}

\begin{proof}[Proof of Theorem \ref{fluc}]
We now analyse the random covariance matrix $A_V$ and show that it is
of the given form in both cases of
Theorem \ref{fluc}. 
Due to (B$4$), we have
\begin{align*}
\frac{1}{\sqrt{n \ell_n}}
\left(Y_n - \sum_{k=1}^p
  n^{\frac{\lambda_k}{r}}\Xi_kv_k \right)\sim
  \sum_{k=1}^q
  \left(Z_{n}^{(2k-1)}\Re\left(v_k\right)-Z_{n}^{(2k)}\Im(v_k)\right)
  = M Z_n
\end{align*}
almost surely, where $\ell_n = 1$ in the first case and $\ell_n = \log n$ in the
second case. With this, Theorem \ref{CW} and the continuous mapping theorem imply that 
\begin{align*}
\frac{1}{\sqrt{n \ell_n}}
\left(Y_n - \sum_{k=1}^p
  n^{\frac{\lambda_k}{r}}\Xi_kv_k \right)
  \stackrel{\mathcal{L}}{\longrightarrow} M \mathcal{N}(0, \Sigma_V)
\end{align*}
and thus $A_V = M \Sigma_V
M^t$ in both cases.

It remains to decide in which cases $(A_V)_{j,j}>0$. First, assume that we are in case $1$ of Theorem \ref{fluc}. We do not work
with the matrix $A_V$ directly, but rather use the fact that the conditional squared increments of the approximate
martingale difference sum
\begin{align} \label{colj}
\sum_{k=1}^q \left(Z_{n}^{(2k-1)}\Re\left(v_{k}^{(j)}\right)-Z_{n}^{(2k)}\Im\left(v_{k}^{(j)}\right)\right),
\end{align}
$j=1, \ldots, q$, converge to $(A_V)_{j,j}0$ almost surely, as in the proof of Theorem \ref{CW}. In the following, let 
\begin{align*}
\alpha_1= \Re(v_1^{(j)}), \alpha_2= -\Im(v_1^{(j)}), \ldots, \alpha_{2q-1}= \Re(v_q^{(j)}), \alpha_{2q}= -\Im(v_q^{(j)}).
\end{align*}

{\em Non--dominant colours.} If $j$ belongs to a
non--dominant colour class,
$v_k^{(j)} = 0$ for all dominant colours $k$ by our choice of right
eigenvectors, and (\ref{colj}) reduces
to a sum over type $3$ colours. Now the proof of
Theorem \ref{CW} implies that the remaining summands in (\ref{colj})
converge weakly to a mixed Gaussian distribution with variance
$(A_V)_{j,j}= 0$, as all variances and covariances of
type $3$ projections are zero in the limit. 

{\em Dominant colours.} Suppose that $j$ is a dominant colour in the sense that it belongs to one of the classes $\mathcal{C}_1, \ldots, \mathcal{C}_{a+c}$, say $j \in \mathcal{C}_m$. Again, by our choice of eigenvectors,
the sum (\ref{colj}) reduces to a sum over colours in $\mathcal{C}_m$, as
$v_k^{(j)} \not= 0$ only if $k$ is a colour in class $\mathcal{C}_m$. We distinguish two subcases: First, if $a+c>1$, then the variance among the supercolours is enough to guarantee positive variance of colour $j$. More precisely, the almost sure limit of 
\begin{align*}
\sum_{j=n}^{n^2}\E[I_{n,j}^2 |\mathcal{F}_{n,j-1}]
\end{align*}
with an appropriate choice of coefficients yields an almost sure lower bound on the variance of colour $j$. Let $k_1, \ldots, k_\ell$ be the indices of large eigenvectors $v_{k_1}, \ldots, v_{k_\ell}$ associated to class $\mathcal{C}_m$, where the vector $v_{k_1}$ is associated to the eigenvalue $r$. Then for $j=n, \ldots, n^2$, omitting the $n$ in $\beta_{2k-1}(n), \beta_{2k}(n)$,
\begin{align*}
&\E[I_{n,j}^2 |\mathcal{F}_{n,j-1}] \geq  (\E[\beta_{2k_1-1}^2(M_j^{(k_1)} - M_{j+1}^{(k_1)})^2|\mathcal{F}_{n,j-1}] \\
&+ \sum_{s=2}^\ell \E[\beta_{2k_1-1}(M_j^{(k_1)} - M_{j+1}^{(k_1)})(\beta_{2k_s-1}\Re(M_j^{(k_s)} - M_{j+1}^{(k_s)})+\beta_{2k_s}\Im(M_j^{(k_s)} - M_{j+1}^{(k_s)}))|\mathcal{F}_{n,j-1}]) \\
& \longrightarrow r^2 (v_{k_1}^{(j)})^2\left(\frac{\Xi_{k_1}}{r}
  + \frac{\pi_k(X_0)}{|X_0|}\right) \left(1-\left(\frac{\Xi_{k_1}}{r}
  + \frac{\pi_k(X_0)}{|X_0|}\right)\right) >0
\end{align*}
almost surely by Corollary \ref{variance}.

If $a+c=1$, we have to follow a different route. 
As before, each of the $n^2+1-g$ summands in  (\ref{Wachstum}) is
non--negative and thus any sum over less terms yields a lower bound on
the whole sum. Only considering part of the sum has the advantage
that the variances and covariances of the fluctuations in the sum
grow at a different speed.
For example, for $n$ large and
$\varepsilon \in (0,1)$, we can either sum from $g$ to $\varepsilon n$
or from $\varepsilon^{-1} n$ to $n^2$ to get a lower bound.
In the first case, the squared increments comprise of summands with $\Re(\lambda_k),
\Re(\lambda_{\ell}) \le r/2$. A calculation
as in the proof of Theorem \ref{CW} shows that the contribution coming
from the fluctuations in projections $\pi_k, \pi_{\ell}$ to the
sum (\ref{Wachstum}) cut off at $\varepsilon n$ with coefficients as above is at most
of order $\varepsilon^{1- \Re(\lambda_k+\lambda_{\ell})/r}$. In the
second case, the squared increments comprise of summands with $\Re(\lambda_k),
\Re(\lambda_{\ell}) > r/2$. The contribution coming
from the fluctuations in projections $\pi_k, \pi_{\ell}$ to the
sum (\ref{Wachstum}) without the first $\varepsilon^{-1} n$ summands with coefficients chosen appropriately is at most
of order $\varepsilon^{\Re(\lambda_k+\lambda_{\ell})/r -1}$. In
particular, the variance contribution from projections with real part
close to $r/2$ (and nonzero coefficients) is the greatest.

We thus choose $k$ such that among all
possible eigenvalues $\lambda_k \not=0$ associated to $\mathcal{C}_m= \mathcal{C}_1$, the distance $|\Re(\lambda_k)/r- 1/2|$ is
minimal and $|v_k^{(j)}| > 0$. If there are a large eigenvalue $\lambda_{\ell}$ and a small eigenvalue $\lambda_k \not= 0$ such that
$|\Re(\lambda_k)/r- 1/2|= |\Re(\lambda_{\ell})/r- 1/2|$ is minimal, choose any of them.
Moreover, this is possible as the Perron--Frobenius
eigenvalue associated with $\mathcal{C}_1$ satisfies these conditions,
for example. Also due to assumption (A$4$), $\lambda_k$ is simple in $\mathcal{C}_1$. 

In case $1$ of Theorem \ref{fluc}, either
$\Re(\lambda_k)/r <1/2$ or $\Re(\lambda_k)/r > 1/2$. If
$\Re(\lambda_k)/r <1/2$, we choose $\varepsilon$ small enough such that
$2 \Im(\lambda_k)/r \log \varepsilon$ is a negative multiple of $2
\pi$ and cut off at $\varepsilon n$.
The dominant term now is of order
$\varepsilon^{1-2\Re(\lambda_k)/r}$. It has coefficient
\begin{align*}
\frac{1}{2}\frac{|\lambda_k|^2|v_k^{(j)}|^2}{\Re(1-2\lambda_k/r)}\sum_{\ell=1}^q
  V^{(\ell)}|u_k^{(\ell)}|^2 + \frac{1}{2|1-2 \lambda_k/r|^2} \sum_{\ell=1}^q V^{(\ell)} Re((1-2\lambda_k/r)\lambda_k^2(\bar{u}_k^{(\ell)})^2(\bar{v}_k^{(j)})^2),
\end{align*}
which is
positive (recall that $|v_k^{(j)}| > 0$ and $(\Sigma_V)_{2k-1,2k-1}>0$ or
$(\Sigma_V)_{2k,2k}>0$).

If
$\Re(\lambda_k)/r >1/2$, we choose $\varepsilon$ small enough such that
$2 \Im(\lambda_k)/r \log \varepsilon$ is a negative multiple of $2
\pi$ and start the sum at $\varepsilon^{-1} n$.
The dominant term now is of order
$\varepsilon^{2\Re(\lambda_k)/r-1}$ and has non--zero coefficient.


We finally turn to case $2$ of Theorem \ref{fluc}, in which there is at least one dominant class which has an eigenvalue $\lambda_k$ with $\Re(\lambda_k)/r =1/2$. In this case, the claim simply follows from the diagonal structure of the matrix $\Sigma_V$ and our choice of eigenvectors. 
\end{proof}

\end{document}